\newtheorem{theorem}{Theorem}[section]
\newtheorem{lemma}[theorem]{Lemma}
\newtheorem{proposition}[theorem]{Proposition}
\newtheorem{corollary}[theorem]{Corollary}
\theoremstyle{definition}
\newtheorem*{ack}{Acknowledgements}
\newtheorem{remark}[theorem]{Remark}
\newtheorem{definition}[theorem]{Definition}
\numberwithin{equation}{section} \numberwithin{figure}{section}
\DeclareMathOperator{\Sym}{Sym}
\DeclareMathOperator{\Spec}{Spec}
\DeclareMathOperator{\an}{an}
\DeclareMathOperator{\Hom}{Hom}
\DeclareMathOperator\Vanish{V}
\DeclareMathOperator{\supp}{supp}
\newcommand{\Tangent}{\textrm{T}}
\DeclarePairedDelimiter\ceil{\lceil}{\rceil}
\DeclareMathOperator\id{id}
\newcommand*\ratmap{\mathbin{\tikz [baseline=-0.25ex,-latex, dashed, ->] \draw [densely dashed] (0pt,0.5ex) -- (1.3em,0.5ex);}}
\definecolor{orange}{rgb}{1,0.5,0}
\title{Kobayashi-Ochiai's finiteness theorem for   orbifold pairs of general type}
\author{Finn Bartsch}
\address{Finn Bartsch \\
IMAPP Radboud University Nijmegen \\
PO Box 9010, 6500GL \\
Nijmegen, The Netherlands\\}
\email{f.bartsch@math.ru.nl }
\author{Ariyan Javanpeykar}
\address{Ariyan Javanpeykar \\ 
IMAPP Radboud University Nijmegen \\
PO Box 9010, 6500GL\\
 Nijmegen, The Netherlands}
\email{ariyan.javanpeykar@ru.nl }
\subjclass[2010]
{14D99 
(14E22,  
14G99,  
11G99)} 
\keywords{Campana orbifold pairs, function fields, rational points, Lang-Vojta conjecture}
\begin{document}

\begin{abstract}
Kobayashi--Ochiai proved that the set of dominant maps from a fixed variety to a fixed variety of general type is finite. We prove the natural extension of their finiteness theorem to Campana's orbifold pairs.    
\end{abstract}
\maketitle

\thispagestyle{empty}

\section{Introduction}
In \cite{KobaOchiai} Kobayashi and Ochiai proved a higher-dimensional generalization of the finiteness theorem of De Franchis for compact Riemann surfaces. Namely, for $X$ and $Y$ smooth projective varieties over $\mathbb{C}$ with $X$ of general type, the set of dominant rational maps $Y\ratmap X$ is finite. 
In this paper we prove a generalization of the classical finiteness theorem of Kobayashi--Ochiai for dominant rational maps in the setting of Campana's orbifold maps (Theorem \ref{thm1}). The notion of orbifold pairs (also referred to as \emph{C-pairs} \cite{Smeets}) was introduced in \cite{CampanaOr0,CampanaJussieu} and has already been shown to be fruitful in, for example, the resolution of Viehweg's hyperbolicity conjecture \cite{CPaun}. \smallbreak 
  
Let $k$ be an algebraically closed field.
A variety (over $k$) is an integral finite type separated scheme over $k$. 
A \emph{$\mathbb{Q}$-orbifold (over $k$)} $(X, \Delta)$ is a variety $X$ together with a $\mathbb{Q}$-Weil divisor $\Delta$ on $X$ such that all coefficients of $\Delta$ are in $[0,1]$. If $\Delta = \sum_i \nu_i \Delta_i$ is the decomposition of $\Delta$ into prime divisors, we say that $m(\Delta_i) := (1-\nu_i)^{-1}$ is the \emph{multiplicity} of $\Delta_i$ in $\Delta$. If all multiplicities of a $\mathbb{Q}$-orbifold are in $\mathbb{Z} \cup \{ \infty\}$, we say that $(X,\Delta)$ is a \emph{$\mathbb{Z}$-orbifold} or simply an \emph{orbifold}. \smallbreak 

A $\mathbb{Q}$-orbifold $(X, \Delta)$ is \emph{normal} if the underlying variety $X$ is normal.
Moreover, a $\mathbb{Q}$-orbifold $(X, \Delta)$ is \emph{smooth (over $k$)} if the underlying variety $X$ is smooth and the support of the orbifold divisor $\supp \Delta$ is a divisor with strict normal crossings. This means that every component of $\supp \Delta$ is smooth and that étale locally around any point of $X$, the divisor $\supp \Delta$ is given by an equation of the form $x_1\cdots x_n=0$ for some $n \leq \dim X$. \smallbreak 

Let $(X, \Delta_X)$ be a normal $\mathbb{Q}$-orbifold and $(Y, \Delta_Y)$ be a $\mathbb{Q}$-orbifold such that $Y$ is locally factorial. In this case, we define a \emph{morphism} of $\mathbb{Q}$-orbifolds $f \colon (X, \Delta_X) \to (Y, \Delta_Y)$ to be a morphism of varieties $f \colon X \to Y$ satisfying $f(X) \nsubseteq \supp \Delta_Y$ such that, for every prime divisor $E \subseteq \supp \Delta_Y$ and every prime divisor $D \subseteq \supp f^*E$, we have $t m(D) \geq m(E)$, where $t \in \mathbb{Q}$ denotes the coefficient of $D$ in $f^*E$; the local factoriality of $Y$ ensures that $E$ is a Cartier divisor, so that $f^\ast E$ is well-defined. Note that, equivalently, we can require that $t - 1 + \nu_D \geq t \nu_E$, where $\nu_D$ and $\nu_E$ are the coefficients of $D$ in $\Delta_X$ and  of $E$ in $\Delta_Y$, respectively. \smallbreak 

If $X$ is a normal variety, we identify $X$ with the orbifold $(X, 0)$. If $X$ and $Y$ are varieties such that $X$ is normal and $Y$ is locally factorial, every morphism of varieties $X \to Y$ is an orbifold morphism $(X,0) \to (Y,0)$. \smallbreak 
 
A $\mathbb{Q}$-orbifold $(X, \Delta)$ is \emph{proper} (resp. \emph{projective}) if the underlying variety $X$ is proper (resp. projective) over $k$.
A smooth proper $\mathbb{Q}$-orbifold $(X,\Delta)$ is of \emph{general type} if $K_X+\Delta$ is a big $\mathbb{Q}$-divisor, where $K_X$ denotes the canonical divisor of $X$. If $\Delta=0$, we recover the usual notion of a smooth proper variety of general type. If the multiplicities of $\Delta$ are all infinite, then $(X,\Delta)$ is of general type if and only if the smooth quasi-projective variety $X\setminus \Delta$ is of log-general type. Finally, if $X$ is a smooth proper variety of nonnegative Kodaira dimension, $D$ is a strict normal crossings divisor, and $m \geq 2$, then the orbifold $(X, (1-\frac{1}{m})D)$ is of general type if and only if $X \setminus D$ is of log-general type.

\begin{theorem}\label{thm1}
 If $(Y,\Delta)$ is a smooth proper orbifold pair of general type and $X$ is a normal variety, then the set of separably dominant orbifold morphisms $X\to (Y,\Delta)$ is finite. 
\end{theorem}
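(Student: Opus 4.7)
The plan is to follow the classical Kobayashi--Ochiai argument, with an \emph{orbifold Riemann--Hurwitz inequality} playing the role of the classical ramification formula. After standard reductions --- resolving singularities of $X$, resolving the indeterminacy of a Nagata compactification, and restricting to a very general complete intersection of dimension $\dim Y$ when $\dim X > \dim Y$, all of which are finite-to-one operations on the set of orbifold morphisms --- we may assume that $X$ is smooth and projective with $\dim X = \dim Y$.

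The first key step is to prove $K_X \geq f^{\ast}(K_Y + \Delta)$ as divisors, for any separably dominant orbifold morphism $f : X \to (Y, \Delta)$. This is a component-wise check using the ramification formula $K_X = f^{\ast} K_Y + R$: for a prime divisor $D \subset X$ lying over a prime $E \subseteq \mathrm{supp}(\Delta)$ with coefficient $t$ of $D$ in $f^{\ast} E$, the coefficient of $D$ in $K_X - f^{\ast}(K_Y + \Delta)$ is at least $(t-1) - t\nu_E = t/m(E) - 1$, which is non-negative by the orbifold morphism condition $t \geq m(E)$ (using $m(D) = 1$ since we identify $X$ with $(X, 0)$). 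For ramification components not lying over $\mathrm{supp}(\Delta)$, the coefficient is at least $(t-1) \geq 0$.

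Since $K_Y + \Delta$ is big and $f$ is separably dominant and generically finite, $f^{\ast}(K_Y + \Delta)$ is big; combined with the above inequality, $K_X$ is big (so $X$ is automatically of general type) and $\mathrm{vol}(K_X) \geq (\deg f) \cdot \mathrm{vol}(K_Y + \Delta)$. As $\mathrm{vol}(K_Y + \Delta) > 0$, the degrees $\deg f$ are then uniformly bounded; consequently the graphs $\Gamma_f \subset X \times Y$ have uniformly bounded degree with respect to any fixed polarization, and are parametrized by a finite-type scheme $M$ over $k$ (say, a quasi-projective Chow or Hilbert scheme).

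It remains to show that $M$ is $0$-dimensional, and this is the main obstacle. A positive-dimensional component of $M$ would produce a non-trivial family $\{f_t\}_{t \in T}$ of separably dominant orbifold morphisms together with a universal evaluation map $F : X \times T \to (Y, \Delta)$. To rule this out, I expect to appeal to an orbifold rigidity statement in Campana's framework --- morally a vanishing of the form $H^0(X, f^{\ast} T_{(Y, \Delta)}) = 0$ for separably dominant $f$ into a general-type orbifold --- which is the orbifold analogue of classical non-uniruledness and generic semi-positivity statements for $\Omega^1_Y$. Establishing this rigidity in the generality required (in particular in positive characteristic, given the hypothesis \emph{separably dominant}) is the principal technical hurdle.
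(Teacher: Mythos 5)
Your reductions (resolving $X$, compactifying, cutting down to $\dim X = \dim Y$ by very general complete intersections) and the orbifold Riemann--Hurwitz computation $K_X \geq f^{\ast}(K_Y+\Delta)$ are both correct, and the conclusion that $\deg f$ is uniformly bounded and the graphs $\Gamma_f$ live in a finite-type Hilbert/Chow scheme $M$ is fine. But the step you flag as the ``principal technical hurdle'' --- showing $M$ is zero-dimensional --- is not a hurdle you can defer; it \emph{is} the theorem, and the tool you propose for it cannot work as stated.

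The problem is that for a genuine orbifold pair $(Y,\Delta)$ with finite multiplicities $>1$ there is no coherent sheaf $\Omega^1_{(Y,\Delta)}$ and hence no $T_{(Y,\Delta)}$: this is exactly the obstruction the paper highlights in its introduction. What Campana's framework supplies are locally free sheaves $S^n\Omega^p_{(Y,\Delta)}$ that imitate \emph{symmetric powers} of differentials without being symmetric powers of anything, and in top degree $S^N\Omega^{\dim Y}_{(Y,\Delta)} \cong \omega_Y(\Delta)^{\otimes N}$ (Lemma \ref{symmetric_to_canonical}). A ``vanishing $H^0(X,f^{\ast}T_{(Y,\Delta)})=0$'' statement therefore does not even have a candidate formulation. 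Moreover, even in the classical ($\Delta=0$) case, Kobayashi--Ochiai and Tsushima do not deduce finiteness from boundedness of degree plus an infinitesimal rigidity vanishing; Tsushima's \cite[Lemma~7]{Tsushima}-style argument (which the paper follows) is global. Concretely, the paper forgoes the degree bound entirely: it embeds $\{f\}$ into $\mathbb{P}(\Hom(V_Y,V_X))$ via $f\mapsto [f^{\ast}]$ acting on sections of $\omega_Y(\Delta)^{\otimes N}$, identifies a locally closed subset $H_3$ containing the image (Lemmas \ref{closed_hom}, \ref{rat_dominant_open}, \ref{prescribed_poles_closed}), and shows each irreducible component $H_4$ is a point by building a morphism $\Psi$ from a projective alteration $\widetilde{H_4}$ to the affine space $\Hom(V_Y,V_X)$, which must be constant; $\Psi$ is made well-defined precisely by the direct-summand decomposition of $S^N\Omega^n$ on the product orbifold $(X,\Delta_X)\times(\widetilde{H_4},D_H)$ (Lemma \ref{direct_summand}) and the pullback functoriality of $S^N\Omega^n$ (Lemma \ref{pullback_of_forms}). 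That is the orbifold substitute for the ``rigidity'' step you are missing, and it is where the real work lies.
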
  

We prove a more general result in which we consider rational maps $X \ratmap (Y,\Delta)$; see Theorem \ref{thm:final} for a precise statement. \smallbreak 

Theorem \ref{thm1} (or, actually, its more precise version Theorem \ref{thm:final}) generalizes Kobayashi--Ochiai's finiteness theorem for dominant rational maps to a projective variety of general type in characteristic zero (take $\Delta$ to be trivial and $k$ of characteristic zero) \cite{KobaOchiai}. It also implies Tsushima's extension of Kobayashi--Ochiai's theorem to varieties of log-general type (take the multiplicities of $\Delta$ to be infinity and $k$ of characteristic zero) \cite{Tsushima}. Moreover, we also obtain the  finiteness theorem of Martin-Deschamps and  Menegaux   for separably dominant morphisms to a proper variety of general type (take $\Delta$ to be trivial and $k$ of arbitrary characteristic) \cite{MartinDeschampsMenegaux}, as well as Iwanari--Moriwaki's extension of Tsushima's result to characteristic $p$ \cite{IwanariMoriwaki}. Finally, we obtain a new proof of Campana's extension of De Franchis's theorem for one-dimensional smooth proper orbifold pairs of general type; see \cite[\S3]{CampanaMultiple}. \smallbreak

The theorem of Kobayashi--Ochiai can be made effective in the sense that one can give effective upper bounds for the number of dominant maps from a fixed variety to a fixed variety of general type; see \cite{BandmanDethloff,Heier, NaranjoPirola}. It seems reasonable to expect that one can obtain similar effective statements in the orbifold setting. \smallbreak 

One part of the Green--Griffiths--Lang conjecture predicts that every complex projective hyperbolic variety is of general type. In particular,  the theorem of Kobayashi--Ochiai suggests  that a similar finiteness statement for dominant maps should hold for projective hyperbolic varieties. Such a finiteness result for projective hyperbolic varieties was in fact already conjectured by Lang in the early seventies (see \cite{Lang1}) and proven by Noguchi \cite{Noguchi} (see also \cite{Suzuki}) in the early nineties. \smallbreak

We stress that, conjecturally, a complex projective variety is of general type if and only if it is ``pseudo-hyperbolic'', i.e., there is a proper closed subset $\Delta\subsetneq X$ such that every entire curve $\mathbb{C}\to X^{\an}$ lands in $\Delta$. The analogous finiteness statement for dominant maps to a pseudo-hyperbolic projective variety is currently not known. In particular, its extension to Campana's orbifold pairs is not known either. \smallbreak

As a straightforward application of Theorem \ref{thm1}, we prove the finiteness of the set of surjective endomorphisms of an orbifold pair of general type; see Corollary \ref{corollary:end} for a precise statement. \smallbreak 

We were first led to investigate the orbifold extension of the theorem of Kobayashi--Ochiai    in our joint work with Rousseau on rational points over number fields. We refer the reader to \cite[Theorem~1.1]{BJR} for arithmetic applications of our orbifold extension of Kobayashi--Ochiai's finiteness theorem. \smallbreak

Our proof of Theorem \ref{thm1} follows the general strategy of Kobayashi-Ochiai (and Tsushima). However, these proofs crucially rely on properties of differential forms on (log-)general type varieties (see,  for example,  \cite[Lemma~7]{Tsushima} for a key step in Tsushima's proof relying on properties of tensor powers of the sheaf of differential forms). The main difficulty in proving Theorem \ref{thm1} is that differentials for an orbifold pair $(X,\Delta)$ are not well-behaved. One may even say that there is no meaningful way to define a sheaf $\Omega^1_{(X,\Delta)}$ of orbifold differentials on $X$. On the other hand, in his seminal work on orbifold pairs \cite[\S 2]{CampanaJussieu}, Campana suggests to instead use locally free sheaves which mimick sheaves of symmetric differentials. These sheaves are abusively denoted by $S^n \Omega^p_{(X,\Delta)}$ despite the lack of existence of $\Omega^p_{(X,\Delta)}$; see Section \ref{section:sym} for a precise definition. The aforementioned key step in Tsushima's proof is then replaced by an argument involving symmetric differentials on $(X,\Delta)$; see the proof of Proposition \ref{prop0} for details.

\subsection{Conventions} 
We work over an algebraically closed field $k$.
A \emph{variety} is an integral separated scheme of finite type over $k$.
If $X$ and $Y$ are varieties, we write $X \times Y$ for $X \times_{\Spec k} Y$. 
A \emph{point} of a variety is a schematic point and need not be closed. 
If $\mathcal{L}$ is a line bundle, $D$ is a $\mathbb{Q}$-divisor, and $n$ is a natural number such that $nD$ is a $\mathbb{Z}$-divisor, we abuse notation and write $(\mathcal{L}(D))^{\otimes n}$ instead of $\mathcal{L}^{\otimes n}(nD)$.

\begin{ack}
The second-named author thanks Erwan Rousseau for many helpful discussions on Campana's theory of orbifolds. We thank Pedro N\'u\~nez for pointing out a mistake in an earlier version of the paper (see Remark \ref{remark:pedronunez}). 
\end{ack}

\section{Orbifold near-maps}
In this paper, we work with the more general notion of an orbifold near-map (Definition \ref{def:orbifold_near_map}).   This is a natural replacement of ``rational map of varieties'' in the setting of orbifold pairs.

\begin{definition}
An open subscheme $U \subseteq X$ of a variety $X$ is \emph{big} if its complement is of codimension at least two. 
\end{definition}

\begin{definition}
A rational map $X \ratmap Y$ of varieties is a \emph{near-map} if there is a big open $U \subseteq X$ such that $U \ratmap Y$ is a morphism.   
\end{definition}

Note that a rational map $X \ratmap Y$ is a near-map if and only if it is defined at all codimension one points of $X$. For example, for every normal variety $X$ and any proper variety $Y$, every rational map $X \ratmap Y$ is a near-map.

\begin{remark}\label{remark:pullback_linebundles}
If $X$ and $Y$ are varieties with $X$ locally factorial, and $f \colon X \ratmap Y$ is a near-map, we can pull back a line bundle $\mathcal{L}$ on $Y$ to a line bundle $\tilde{\mathcal{L}}$ on $X$. Indeed, while the pullback bundle $f^*\mathcal{L}$ is a priori only defined on a big open of $X$, as $X$ is locally factorial, it extends uniquely to a line bundle on all of $X$ by \cite[Proposition II.6.5(b) and Corollary II.6.16]{Hartshorne}. Since locally factorial schemes are normal, global sections of $\mathcal{L}$ also pull back to global sections of $\tilde{\mathcal{L}}$. 
\end{remark}

\begin{definition}\label{def:orbifold_near_map}
Let $(X, \Delta_X)$ be a normal orbifold and $(Y, \Delta_Y)$ be an orbifold such that $Y$ is locally factorial. Then an \emph{orbifold near-map} \[f \colon (X, \Delta_X) \ratmap (Y, \Delta_Y)\] is a near-map $f \colon X \ratmap Y$ satisfying $f(X) \nsubseteq \supp \Delta_Y$ such that, for every prime divisor $E \subseteq \supp \Delta_Y$ and every prime divisor $D \subseteq \supp f^*E$, we have $t m(D) \geq m(E)$, where $t \in \mathbb{Q}$ denotes the coefficient of $D$ in $f^*E$; this pullback is well-defined as $E$ is Cartier. As before, this is equivalent to requiring $t - 1 + \nu_D \geq t \nu_E$, where $\nu_D$ and $\nu_E$ are the coefficients of $D$ in $\Delta_X$ and of $E$ in $\Delta_Y$, respectively.
\end{definition}

Caution is advised: the composition of orbifold morphisms need not be an orbifold map. Indeed, although the condition on the multiplicities of divisor pullbacks is stable under composition, the image of the composition might be completely contained in the orbifold divisor of the target. For example, consider the morphism $\mathbb{P}^1 \to (\mathbb{P}^1, (1/2)\cdot \underline{\infty})$ given by $z \mapsto z^2$ and the inclusion of the point $\infty$ into $\mathbb{P}^1$. While both morphisms are orbifold, their composition is the inclusion of the point $\infty$ into $(\mathbb{P}^1, (1/2)\cdot \underline{\infty})$ which is not an orbifold morphism. There is however one situation in which we can compose orbifold morphisms. Namely, if the composition of two orbifold morphisms is dominant, then the composition is again an orbifold morphism. Similar composition behaviour continues to hold for orbifold near-maps. 

\begin{lemma}\label{lemma:near-map-composition}
Let $(Y, \Delta_Y)$ be an orbifold such that $Y$ is locally factorial. Let $X$ and $Z$ be locally factorial varieties. Let $f \colon X \ratmap (Y, \Delta_Y)$ be an orbifold near-map and let $g \colon Z \ratmap X$ be a near-map. If the composition $f \circ g$ exists, is defined in codimension $1$ and does not factor over $\supp \Delta_Y \subset Y$, it defines an orbifold near-map $Z \ratmap (Y,\Delta_Y)$. 
\end{lemma}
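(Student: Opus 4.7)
The plan is to verify the two conditions of Definition \ref{def:orbifold_near_map} for the candidate orbifold near-map $f \circ g \colon (Z, 0) \ratmap (Y, \Delta_Y)$. The non-factoring condition $(f \circ g)(Z) \nsubseteq \supp \Delta_Y$ is part of the hypothesis, and by assumption $f \circ g$ is defined on some big open $W \subseteq Z$, so it is a near-map of varieties. Since $Z$ carries the trivial orbifold structure, all codimension-one multiplicities $m(D')$ on $Z$ equal $1$, so what remains is to show: for every prime divisor $E \subseteq \supp \Delta_Y$ and every prime divisor $D' \subseteq \supp (f \circ g)^\ast E$, the coefficient $t'$ of $D'$ in $(f \circ g)^\ast E$ satisfies $t' \geq m(E)$.

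The key step is a chain rule for pullback of Cartier divisors, reducing the computation for $f \circ g$ to those for $f$ and $g$ separately. Let $\eta$ denote the generic point of $D'$. Shrinking $W$ if necessary, I would assume that $g$ is a morphism on $W$ and that $g(W)$ lies in the domain of $f$. Writing $f^\ast E = \sum_i t_i D_i$ in a neighborhood of $g(\eta)$ in the domain of $f$, with $D_i$ prime divisors of $X$, and computing with a local equation $e$ for $E$ near $(f \circ g)(\eta)$, one obtains $(f \circ g)^\ast e = g^\ast(f^\ast e)$ and hence $t' = \sum_i t_i s_i$, where $s_i = \mult_{D'}(g^\ast D_i)$. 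The non-factoring hypothesis $(f \circ g)(Z) \nsubseteq E$ guarantees that $g(Z) \nsubseteq D_i$ whenever $t_i > 0$, so each such pullback $g^\ast D_i$ is a well-defined effective Cartier divisor near $\eta$ and each $s_i$ is a nonnegative integer.

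To finish, observe that since $D' \subseteq \supp (f \circ g)^\ast E$, there must exist an index $i$ with $t_i > 0$ and $s_i \geq 1$. The orbifold condition for $f \colon (X, 0) \ratmap (Y, \Delta_Y)$ applied to the pair $(E, D_i)$ gives $t_i = t_i \cdot m(D_i) \geq m(E)$, and therefore $t' \geq t_i s_i \geq m(E)$, as required.

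The main obstacle I anticipate is merely the book-keeping involved in reducing the statement to a single big open on which both the chain rule $(f \circ g)^\ast E = g^\ast(f^\ast E)$ holds and all relevant pullbacks are well-defined effective Cartier divisors; the locally factorial hypotheses on $X$ and $Z$ together with Remark \ref{remark:pullback_linebundles} make this routine, and no deeper input than functoriality of divisor pullback for composable morphisms seems to be needed.
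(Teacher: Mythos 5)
Your proof is correct and rests on the same underlying idea as the paper's: the chain rule for pullback of Cartier divisors near a codimension-one point, applied on the big open where the naive composition $f \circ g$ exists, plus the observation that the non-factoring hypothesis makes the intermediate pullbacks $g^\ast D_i$ well-defined effective divisors. The paper's own proof is shorter because it delegates the chain-rule computation to the earlier (stated, but not spelled out) fact that the multiplicity condition is stable under composition of orbifold morphisms as long as the composite does not land in the orbifold divisor; your argument simply makes that computation explicit for the special case of trivial orbifold structure on $X$ and $Z$.
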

\begin{proof}
Let $U \subseteq Z$ be the open subset on which the naive composition $f \circ g$ exists, and let $V \subseteq X$ be the maximal open on which $f$ is defined. Then, by assumption, $U$ is a big open of $Z$. The morphism $f \circ g \colon U \to Y$ is the composition of the morphism of varieties $U \to V$ with the orbifold morphism $V \to (Y, \Delta_Y)$. As the image of the composition is not contained in $\supp \Delta_Y$, the composition $f \circ g \colon U \to (Y, \Delta_Y)$ is thus a morphism of orbifolds. This concludes the proof. 
\end{proof}

\begin{remark}\label{remark:pedronunez}
The assumption that the composition $f \circ g$ exists and is defined in codimension $1$ is necessary. In fact, the weaker condition  that $f \circ g$ exists and extends to a map defined in codimension $1$ does not suffice. This was overlooked in an earlier version of the paper. We give a counterexample. Let $Y$ be the blowup of $\mathbb{P}^2$ in one point $p \in Y$ and let $E \subset Y$ be the exceptional divisor. The inverse of the blowup defines an orbifold near-map $f \colon \mathbb{P}^2 \ratmap (Y, (1/2) \cdot E)$, as the pullback of $E$ along this map is trivial. Now, if $C \subset \mathbb{P}^2$ is any smooth curve passing through the blown up point $p$, the composition $C \to \mathbb{P}^2 \ratmap (Y, (1/2) \cdot E)$ is not defined at $p$. This map extends to a morphism of varieties $C \to Y$. However, this composition is no longer an orbifold morphism, as the pullback of the divisor $E$ to $C$ is given by $[p]$, which does not have the required multiplicity. 
\end{remark}

We note the following special case of Lemma \ref{lemma:near-map-composition}.

\begin{corollary}\label{corollary:near-map-restriction}
Let $X$ be a smooth variety, and let $(Y,\Delta_Y)$ be a smooth proper orbifold. Let $Z \subset X$ be a locally closed smooth subvariety and let $f \colon X \ratmap (Y,\Delta_Y)$ be a dominant orbifold near-map. If $f|_Z$ exists, is defined in codimension one, and is still dominant, then it defines an orbifold near-map $f|_Z \colon Z\ratmap (Y,\Delta_Y)$. 
\end{corollary}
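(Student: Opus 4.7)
The plan is to deduce this as a direct instance of Lemma \ref{lemma:near-map-composition}, taking $g$ to be the inclusion $i \colon Z \hookrightarrow X$. First I would verify that the setup of the lemma is in place. Since $X$, $Z$ are smooth they are locally factorial, and since $(Y,\Delta_Y)$ is a smooth orbifold, $Y$ is locally factorial as well, so the ambient hypotheses of the lemma are met. The inclusion $i \colon Z \to X$ is a morphism of varieties, hence in particular a near-map, and $f$ is an orbifold near-map by assumption.

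Next I would check the two remaining conditions on the composition $f \circ i = f|_Z$. By hypothesis, $f|_Z$ exists as a rational map and is defined in codimension one on $Z$. It remains only to observe that $f|_Z$ does not factor through $\supp \Delta_Y$: since $\supp \Delta_Y$ is a proper closed subset of $Y$, no dominant rational map to $Y$ can factor through $\supp \Delta_Y$, and $f|_Z$ is dominant by hypothesis. With all hypotheses in place, Lemma \ref{lemma:near-map-composition} produces the desired orbifold near-map structure on $f|_Z \colon Z \ratmap (Y,\Delta_Y)$.

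I do not expect any genuine obstacle here, as the content of the statement has been absorbed into Lemma \ref{lemma:near-map-composition}; the only points requiring attention are the translation between the hypotheses of the corollary and those of the lemma, and the role of dominance in ruling out the pathology described in Remark \ref{remark:pedronunez}.
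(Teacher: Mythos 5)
Your proposal is correct and matches the paper's approach exactly: the paper introduces the corollary with the phrase ``We note the following special case of Lemma \ref{lemma:near-map-composition}'' and gives no further proof, and your verification that the inclusion $Z \hookrightarrow X$ is a near-map between locally factorial varieties and that dominance of $f|_Z$ rules out factoring through $\supp\Delta_Y$ is precisely the translation of hypotheses that justifies this.
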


In Section \ref{section:sym} and in the proof of Theorem \ref{thm1} it will be convenient to use the following notion of products for orbifold pairs.

\begin{definition}\label{def:prod}
If $(X, \Delta_X)$ and $(Y, \Delta_Y)$ are two orbifolds, then we define the \emph{product orbifold} by 
\[(X, \Delta_X) \times (Y, \Delta_Y) := (X \times Y, \Delta_X \times Y + X \times \Delta_Y).\]
\end{definition}

If $X$ and $Y$ are locally factorial, the product of orbifolds defined above satisfies the universal property of a product. More specifically, for any orbifold $(T, \Delta_T)$ and for any two orbifold morphisms 
\[\phi_X \colon (T, \Delta_T) \to (X, \Delta_X), \quad  \phi_Y \colon (T, \Delta_T) \to (Y, \Delta_Y),\] 
there is a unique orbifold morphism $\phi \colon (T, \Delta_T) \to (X, \Delta_X) \times (Y, \Delta_Y)$ such that $\phi_X = \pi_X \circ~\phi$ and $\phi_Y = \pi_Y \circ~\phi$. Indeed, it is clear that there is a morphism $\phi \colon (T, \Delta_T) \to X \times Y$, and we just have to check that this is indeed an orbifold morphism after equipping $X \times Y$ with its orbifold structure. First note that the set of closed points $t \in T$ satisfying $\phi_X(t) \notin \supp \Delta_X$ is a non-empty, hence dense, open subset of $T$. Of course, the same holds for the condition $\phi_Y(t) \notin \mathrm{supp} \Delta_Y$, so there is a closed point $t \in T$ satisfying both conditions. Thus, the image of $\phi$ is not contained in $\supp (\Delta_X \times Y + X \times \Delta_Y)$. Now let $E \subseteq \supp (\Delta_X \times Y + X \times \Delta_Y)$ be a prime divisor. Without loss of generality, we may assume that $E = E_X \times Y$ for some prime divisor $E_X \subseteq X$. Now let $D \subseteq \supp \phi_X^* E_X$ be a prime divisor, and let $r$ be its coefficient in $\phi_X^* E_X$. Since we know that $\phi_X$ is an orbifold morphism, we have $r m(D) \geq m(E_X)$. Now note that $\phi^* E = \phi_X^* E_X$, so that $r$ is also the coefficient of $D$ in $\phi^* E$. Furthermore, we have $m(E_X) = m(E)$. Thus, $r m(D) \geq m(E)$, and $\phi$ is an orbifold morphism, as desired.

\section{Families of maps}  

In this section we consider families of maps, and prove that certain conditions on the maps are either open or closed. More precisely, we show in Lemma \ref{closed_hom} that a morphism landing in a closed subscheme is a closed condition, in Lemma \ref{rat_dominant_open} that a rational function being dominant is an open condition, and in Lemma \ref{prescribed_poles_closed} that the pullback of a fixed differential form having no poles outside some fixed divisor is a closed condition.

\begin{lemma}\label{closed_hom}
Let $S$ be a scheme. Let $X\to S$ be a flat morphism whose geometric fibres are reduced. Let $Y$ and $T$ be $S$-schemes, and let $F \colon X \times_S T \to Y$ be an $S$-morphism. Then, for every closed subscheme $Z \subset Y$, the set of $t$ in $T$ such that $F_t \colon X \times_S \{t\} \to Y$ factors over $Z$ is closed in $T$.
\end{lemma}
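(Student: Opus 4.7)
The plan is to rephrase the condition in terms of a closed subscheme of $X \times_S T$ and then to invoke openness of the projection $\pi \colon X \times_S T \to T$.

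First I would set $W := F^{-1}(Z) \subseteq X \times_S T$, which is a closed subscheme. Next I would observe that $F_t$ factors through $Z$ if and only if $W_t = (X \times_S T)_t$ as closed subschemes of the fibre (this is just the universal property of the closed immersion $Z \hookrightarrow Y$ applied to $F_t$). The hypothesis that the geometric fibres of $X \to S$ are reduced implies, by preservation of geometric reducedness under arbitrary field extensions, that every fibre $(X \times_S T)_t \cong X_{s(t)} \times_{\kappa(s(t))} \kappa(t)$ (where $s(t) \in S$ denotes the image of $t$) is reduced. For reduced schemes a closed subscheme is determined by its underlying set, so the above scheme-theoretic equality reduces to the set-theoretic containment of the fibre $(X \times_S T)_t$ in $W$.

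Letting $U := (X \times_S T) \setminus W$ be the open complement, the fibrewise containment fails at $t$ exactly when $\pi^{-1}(t)$ meets $U$, i.e.\ when $t \in \pi(U)$. Hence the set in question is precisely $T \setminus \pi(U)$, and it remains to show that $\pi(U)$ is open in $T$. Since $\pi$ is a base change of $X \to S$, it is flat; and in the finite-type context implicit throughout the paper, it is also locally of finite presentation. A flat, locally finitely presented morphism is open (EGA~IV, Theorem~2.4.6), so $\pi(U)$ is open and its complement is closed, as desired.

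The key step, and the only one that really uses the hypotheses, is the bridge between scheme-theoretic factoring and set-theoretic fibrewise containment; this is exactly what the assumption on reduced geometric fibres is designed to provide. The final openness step is a standard property of flat families and requires no new input.
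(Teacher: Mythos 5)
Your reformulation of the locus as $T\setminus\pi(U)$, with $U := (X\times_S T)\setminus F^{-1}(Z)$, together with the reduction from scheme-theoretic to set-theoretic fibre containment via reducedness, is correct and takes a genuinely different (and arguably more transparent) topological route than the paper. The paper instead forms the closure $\overline{T_1}$ of the locus $T_1$ with its reduced structure, uses flatness of $X\to S$ to push the set-theoretic factorization through $Z$ from $X\times_S T_1$ to $X\times_S\overline{T_1}$ via a base-change-of-dominant argument, and then upgrades set-theoretic to scheme-theoretic factorization on each fibre using geometric reducedness, concluding $T_1=\overline{T_1}$ directly. Both proofs hinge on the same reducedness trick; the difference is entirely in how one gets the topological conclusion.

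There is, however, a genuine gap in your final step. You assert that $\pi \colon X\times_S T\to T$ is open because it is flat and, ``in the finite-type context implicit throughout the paper,'' locally of finite presentation. But the lemma carries no finiteness hypothesis on $X\to S$, and, more to the point, the one place the paper actually invokes it, in the proof of Proposition~\ref{prop0}, instantiates $X$ as $\{\eta_{\overline{X}}\}$, the spectrum of the function field of the (typically positive-dimensional) variety $\overline{X}$ over $k=S$. That morphism is flat but decidedly not of finite type over $k$, so $\pi$ is not locally of finite presentation there, and ``flat $+$ lfp $\Rightarrow$ open'' is not available. In other words, your justification of openness fails in exactly the case the paper needs.

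The gap is repairable. In that application $S=\Spec k$ is the spectrum of a field, and any scheme over a field is universally open over it (EGA~IV, 2.4.9); this gives openness of $\pi$ with no finiteness assumption at all. So you should either restrict the statement to $S$ the spectrum of a field and cite that fact, or, if you want the lemma in the stated generality, abandon the openness-of-$\pi$ route and argue via the closure of $T_1$ as the paper does.
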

\begin{proof}
Let $T_1\subset T$ be the set of $t$ in $T$ such that $F_t$ factors over $Z$. We consider $T_1 = \sqcup_{t\in T_1} \Spec \kappa(t)$ as an $S$-scheme. To show that $T_1$ is closed in $T$, it suffices to show that $\overline{T_1} = T_1$. We endow $\overline{T_1}\subset T$ with the reduced closed subscheme structure. The natural morphism $T_1\subset \overline{T_1}$ is dominant. Since $X\to S$ is flat, the basechange $X\times_S T_1\to X\times_S \overline{T_1}$ is (also) dominant. Since $X\times_S T_1\to Y$ factors set-theoretically through $Z$ and $X\times_S T_1\to X\times_S \overline{T_1}$ is dominant, we see that the restriction of $F$ to $X\times_S \overline{T_1}$ (also) factors set-theoretically through $Z$. However, for any $t\in \overline{T_1}$, the scheme $X\times_S \Spec \kappa(t)$ is geometrically reduced over $\kappa(t)$. In particular, $F_t$ factors (scheme-theoretically) through $Z$, as required.
\end{proof}

\begin{definition}
A rational map $X \ratmap Y$ of varieties over $k$ is \emph{separably dominant} if it is dominant and $k(Y) \subset k(X)$ is separable.
\end{definition}

\begin{lemma}\label{dominant_tangent0}
Let  $f \colon X \to Y$ be a morphism of smooth varieties. Then the following are equivalent:
\begin{enumerate}[label=(\alph*)]
\item\label{dt_dom0} The morphism $f$ is separably dominant.
\item\label{dt_tangentiso0} There is a closed point $x \in X$ such that $df_x \colon \Tangent_x X \to \Tangent_{f(x)} Y$ is surjective.
\item\label{dt_smooth0} There is a closed point $x \in X$ such that $f$ is smooth at $x$.
\end{enumerate}
\end{lemma}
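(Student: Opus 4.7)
The plan is to establish the three equivalences by proving (c) $\Leftrightarrow$ (b) via the infinitesimal criterion for smoothness, then closing the cycle with (c) $\Rightarrow$ (a) using openness of smooth morphisms and (a) $\Rightarrow$ (c) using generic smoothness for separable dominant maps.

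First I would handle (c) $\Leftrightarrow$ (b). That smoothness of $f$ at $x$ implies surjectivity of $df_x$ is formal: the fiber $X_{f(x)}$ is smooth of the expected dimension at $x$, and a dimension count on tangent spaces forces $df_x$ to be surjective. For the converse, I would pick a regular system of parameters $y_1, \ldots, y_n$ for $\mathcal{O}_{Y,f(x)}$; surjectivity of $df_x$ means that $f^*y_1, \ldots, f^*y_n$ have linearly independent images in $m_x/m_x^2$, and since $X$ is smooth they can be completed to a regular system of parameters at $x$. This shows the fiber $X_{f(x)}$ is regular (hence smooth, as $k$ is algebraically closed) at $x$ of the correct dimension, and the local criterion of flatness applied to the regular sequence $f^*y_1, \ldots, f^*y_n$ yields flatness of $f$ at $x$, so $f$ is smooth at $x$.

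Next I would show (c) $\Rightarrow$ (a). Smoothness at a closed point is an open condition, so $f$ is smooth on an open neighborhood $U$ of $x$. Smooth morphisms are flat and hence open, so the image $f(U)$ contains an open subset of $Y$ and $f$ is dominant. Moreover, the generic fiber of $f|_U$ is geometrically regular over $k(Y)$, which is equivalent to $k(X)/k(Y)$ being separable.

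Finally, for (a) $\Rightarrow$ (c), I would invoke generic smoothness: when $k(X)/k(Y)$ is separable, the sheaf $\Omega^1_{X/Y}$ has the expected rank $\dim X - \dim Y$ at the generic point of $X$, and combined with generic flatness of any morphism of varieties, this produces a dense open subset $U \subseteq X$ on which $f|_U$ is smooth. Since $k$ is algebraically closed and $X$ is a $k$-variety, $U$ contains a closed point, giving (c). The main subtlety here, and the step requiring the most care, is the generic smoothness argument in arbitrary characteristic; this is where the paper's definition of separability (as separability of the function field extension, equivalently geometric reducedness of the generic fiber) is used in an essential way so that the standard cotangent sheaf argument still delivers a nonempty smooth locus.
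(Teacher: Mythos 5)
Your proposal is correct, and the underlying circle of ideas (Jacobian criterion at a point, miracle flatness, generic smoothness for separable maps) is the same as the paper's, but the organization and specific tools differ enough to be worth noting. The paper proves the cycle $(a)\Rightarrow(b)\Rightarrow(c)\Rightarrow(a)$: for $(a)\Rightarrow(b)$ it considers the bad-rank locus $Z=\{x:\operatorname{rank}df_x<\dim Y\}$ and invokes the arguments of Hartshorne III.10.5 and III.10.6 (adapted to positive characteristic via separability) to show $\overline{f(Z)}\subsetneq Y$, hence $Z\neq X$; for $(b)\Rightarrow(c)$ it cites Hartshorne III.10.4. You instead prove $(b)\Leftrightarrow(c)$ directly with regular systems of parameters and miracle flatness, and $(a)\Rightarrow(c)$ via the generic rank of $\Omega^1_{X/Y}$ together with generic flatness. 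Your route is more self-contained and arguably cleaner for readers who want to see the mechanism rather than chase Hartshorne references, and it isolates precisely where separability is used (to make $\Omega^1_{K(X)/K(Y)}$ have dimension $\operatorname{trdeg}$). The paper's route, by contrast, stays closer to standard textbook statements and makes the adaptation from characteristic zero to positive characteristic explicit by naming the specific results being transplanted. One small point you should make explicit if you write this up: when going from flatness on $f^{-1}(V)$ plus generic rank $d$ of $\Omega^1_{X/Y}$ to smoothness on a dense open, one needs the local constancy of the rank (not merely the rank at the generic point); this follows because on the flat locus the fibers have pure dimension $d$, which gives the lower bound $\operatorname{rank}\geq d$, while upper semicontinuity gives an open locus with $\operatorname{rank}\leq d$, and $X$ reduced then yields local freeness where these coincide.
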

\begin{proof}  
Assume \ref{dt_dom0} holds. Let $Z \subseteq X$ be the locus of points where the rank of $df_x$ is less than $\dim Y$. Since $K(Y)\subset K(X)$ is a separable field extension, the arguments used to prove \cite[Lemma~III.10.5]{Hartshorne} and \cite[Proposition~III.10.6]{Hartshorne} show that the dimension of $\overline{f(Z)}$ is less than $\dim Y$. Thus, since $f$ is dominant, this implies that $Z \neq X$. In particular, there is a closed point $x \in X$ such that the rank of $df_x$ is equal to $\dim Y$. Since $Y$ is smooth, this shows that \ref{dt_tangentiso0} holds.  

Assume \ref{dt_tangentiso0} holds. Then \cite[Proposition III.10.4]{Hartshorne} shows that \ref{dt_smooth0} holds.  

Now assume that \ref{dt_smooth0} holds. There is an open neighborhood $U \subseteq X$ of $x$ such that $f\vert_U$ is smooth. Smooth maps are flat, and flat maps of varieties are open. Hence $f(U)$ is a nonempty open of $Y$. Thus, $f$ is dominant. In particular, $f$ maps the generic point of $X$ to the generic point of $Y$. The smoothness of $f\vert_U$ also implies that $f$ is smooth at the generic point of $X$. Since generically smooth morphisms are separable, we see that \ref{dt_dom0} holds. This concludes the proof.
\end{proof}

\begin{lemma}\label{dominant_tangent}
Let $X$ and $Y$ be varieties of the same dimension. Assume that $X$ is smooth. Let $f \colon X \to Y$ be a morphism. Then the following are equivalent:
\begin{enumerate}[label=(\alph*)]
\item\label{dt_dom} The morphism $f$ is separably dominant.
\item\label{dt_tangentiso} There is a closed point $x \in X$ such that $df_x \colon \Tangent_x X \to \Tangent_{f(x)} Y$ is an isomorphism.
\end{enumerate}
\end{lemma}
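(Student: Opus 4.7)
The plan is to reduce to Lemma \ref{dominant_tangent0} by passing to the smooth locus of $Y$, using the equal-dimension hypothesis to upgrade surjectivity of the differential to an isomorphism (and vice versa).

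For the direction \ref{dt_tangentiso} $\Rightarrow$ \ref{dt_dom}, I would first observe that at a point $x \in X$ where $df_x \colon \Tangent_x X \to \Tangent_{f(x)} Y$ is an isomorphism, the tangent space $\Tangent_{f(x)} Y$ has dimension $\dim X = \dim Y$, so $f(x)$ lies in the smooth locus $Y^{\mathrm{sm}} \subseteq Y$. Then $U := f^{-1}(Y^{\mathrm{sm}})$ is an open neighborhood of $x$, and the restriction $f\vert_U \colon U \to Y^{\mathrm{sm}}$ is a morphism of smooth varieties satisfying condition \ref{dt_tangentiso0} of Lemma \ref{dominant_tangent0}. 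Applying that lemma gives that $f\vert_U$ is separably dominant, which (since $U$ is open and dense in $X$ and $Y^{\mathrm{sm}}$ is open and dense in $Y$) immediately yields that $f$ itself is separably dominant.

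For the direction \ref{dt_dom} $\Rightarrow$ \ref{dt_tangentiso}, I would again consider the nonempty open $U := f^{-1}(Y^{\mathrm{sm}})$, which is nonempty because $f$ is dominant and $Y^{\mathrm{sm}}$ is a nonempty open of $Y$. The restriction $f\vert_U \colon U \to Y^{\mathrm{sm}}$ is then a separably dominant morphism of smooth varieties, so by Lemma \ref{dominant_tangent0} there is a closed point $x \in U$ at which $df_x \colon \Tangent_x X \to \Tangent_{f(x)} Y^{\mathrm{sm}} = \Tangent_{f(x)} Y$ is surjective. Since $f(x)$ is a smooth point of $Y$, the target has dimension $\dim Y = \dim X$, and surjectivity of $df_x$ between vector spaces of the same finite dimension forces $df_x$ to be an isomorphism.

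There is no real obstacle here: the only subtlety is being careful that $Y$ is not assumed smooth, so we must localize on the smooth locus before invoking Lemma \ref{dominant_tangent0}, and we must justify that the required closed point lies in $f^{-1}(Y^{\mathrm{sm}})$, which follows either from the openness of the smooth locus combined with dominance, or, in the reverse direction, from the tangent-space dimension computation at the given point $x$.
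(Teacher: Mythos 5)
Your proof is correct and follows essentially the same route as the paper: both directions reduce to Lemma \ref{dominant_tangent0} by localizing on the smooth locus of $Y$, then use the equal-dimension hypothesis to pass between surjectivity and bijectivity of the differential. The only cosmetic difference is that the paper phrases the reduction as ``we may assume $Y$ is smooth'' while you work explicitly with $f^{-1}(Y^{\mathrm{sm}})$; the content is identical.
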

\begin{proof}  
Assume \ref{dt_dom} holds. Let $U \subseteq Y$ be the locus of smooth points of $Y$. Then $U$ is a dense open of $Y$, and the restriction of $f \colon f^{-1}(U) \to U$ is still separably dominant. Thus, we may assume that $Y$ is smooth. In particular, by Lemma \ref{dominant_tangent0}, there is a closed point $x$ in $X$ such that $df_x \colon \Tangent_x X \to \Tangent_{f(x)} Y$ is surjective. Since $X$ and $Y$ are smooth of the same dimension, it follows that $df_x$ is an isomorphism.  

Assume \ref{dt_tangentiso} holds. Then, the point $f(x)$ is a regular point of $Y$. We can thus replace $Y$ by an open neighborhood of $f(x)$ and replace $X$ by the preimage of that. The assumption on the dimensions continues to hold, so we may assume that $Y$ is smooth. Consequently, by Lemma \ref{dominant_tangent0}, the morphism $f$ is separably dominant.
\end{proof}

\begin{definition}
If $X$, $Y$ and $T$ are varieties, we say that a rational map $f \colon X \times T \ratmap Y$ is a \emph{relative rational map (over $T$)} if the maximal open subset $U \subseteq X \times T$ on which $f$ is defined has nonempty intersection with every closed fiber $X_t := X \times \{t\}$. In other words, it is a family of rational maps $f_t \colon X \ratmap Y$ parametrized by the variety $T$. 
\end{definition}

\begin{lemma}\label{rat_dominant_open}
Let $X$ and $Y$ be varieties of the same dimension and let $T$ be any variety. Let $F \colon X \times T \ratmap Y$ be a relative rational map. Then the locus of $t \in T$ such that $F_t \colon X \ratmap Y$ is separably dominant is open in $T$. 
\end{lemma}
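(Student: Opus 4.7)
The plan is to reformulate the separable dominance of $F_t$ as the non-vanishing at some point of $X \times \{t\}$ of the determinant of a morphism of vector bundles on an appropriate open of $X \times T$; once this is done, the openness statement will follow from the openness of flat morphisms. The bridge from separable dominance to an infinitesimal condition is provided by Lemma \ref{dominant_tangent}.

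First I would replace $X$ by its smooth locus. Since this is a dense open and separable dominance of a rational map depends only on the generic behaviour of the source, we may assume that $X$ is smooth. Let $U \subseteq X \times T$ denote the maximal open on which $F$ is defined; by the relative-rational-map hypothesis, $U \cap X_t$ is nonempty for every closed $t \in T$. Applying Lemma \ref{dominant_tangent} to the morphism $F_t \colon U \cap X_t \to Y$, the rational map $F_t$ is separably dominant if and only if there is a closed point $x \in U \cap X_t$ at which the differential $dF_{t,x} \colon \Tangent_x X \to \Tangent_{F(x,t)} Y$ is an isomorphism. Since $\dim X = \dim Y$, the existence of such an isomorphism forces $F(x,t)$ to lie in the smooth locus $Y^{\mathrm{sm}}$, so I would restrict attention to the open $V := U \cap F^{-1}(Y^{\mathrm{sm}})$ of $X \times T$.

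On $V$, both the relative tangent sheaf $\Tangent_{V/T}$ and the pullback $F^{*}\Tangent_{Y^{\mathrm{sm}}}$ are locally free of rank $\dim X = \dim Y$, and the relative differential $dF \colon \Tangent_{V/T} \to F^{*}\Tangent_{Y^{\mathrm{sm}}}$ is therefore a morphism of vector bundles of equal rank on $V$. The locus $W \subseteq V$ where this morphism is an isomorphism is the non-vanishing locus of its determinant, hence open in $V$. By the reformulation above, the set of closed $t \in T$ for which $F_t$ is separably dominant equals $\pr_T(W)$.

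To conclude, I would observe that the projection $V \to T$ is flat (it is an open subscheme of $X \times T$, which is flat over $T$ as the base change of $X \to \Spec k$) and of finite type, and hence open. Consequently $\pr_T(W)$ is open in $T$, which is exactly the locus we want. The step I expect to require the most care is verifying that $V$ captures every $t$ at which $F_t$ is separably dominant; but once Lemma \ref{dominant_tangent} is invoked, this is forced by the equality $\dim X = \dim Y$, as explained above.
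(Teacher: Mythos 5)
Your proof is correct and takes essentially the same approach as the paper: reduce to $X$ smooth, reformulate separable dominance of $F_t$ via Lemma \ref{dominant_tangent} as an open condition (the relative differential being an isomorphism somewhere on the fibre), and conclude via openness of the flat projection to $T$. The only cosmetic differences are that the paper uses a de Jong alteration to smooth $X$ where you pass to the smooth locus, and that the paper works with the absolute differential of the $T$-morphism $G \colon U \to Y \times T$ and argues via its block structure, whereas you work directly with the relative differential $\Tangent_{V/T} \to F^*\Tangent_{Y^{\mathrm{sm}}}$.
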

\begin{proof}
Replacing $X$ by an alteration if necessary, we may assume that $X$ is smooth \cite[Theorem~4.1]{deJongAlterations}. Let $U \subseteq X \times T$ be the maximal open subset on which $F$ is defined. The map $F$ then induces a morphism of $T$-schemes $G \colon U \to Y \times T$.  

We claim that if $(x,t) \in U$ is any closed point, the differential $dG_{(x,t)}$ is an isomorphism if and only if the differential of the rational map $F_t \colon X \ratmap Y$ is an isomorphism at $x \in X$. To see this, note that the tangent spaces of $X \times T$ and $Y \times T$ are the products of the tangent spaces of the factors. Furthermore, the component of $dG_{(x,t)}$ which maps $\Tangent_t T \to \Tangent_{F(x,t)} X$ is the zero map, and the component which maps $\Tangent_t T \to \Tangent_t T$ is just the identity. Lastly, the component of $dG_{(x,t)}$ mapping $\Tangent_x X \to \Tangent_{F_t(x)}$ is just $dF_t$. This implies the claim.  

Let $t \in T$ be a closed point. By Lemma \ref{dominant_tangent}, $F_t$ is separably dominant if and only if there is a closed point $x \in X$ such that the differential $dF_{t,x}$ is an isomorphism. By the previous claim, this happens if and only if there is a closed point $x \in X$ such that $dG_{(x,t)}$ is an isomorphism.  

Let $V \subseteq U$ be the set of all points at which the differential of $G$ is an isomorphism. The set $V$ is open in $U$, hence open in $X \times T$. The map $X \times T \to T$ is flat, hence open. Thus, the projection of $V$ to $T$ is an open subset of $T$. By the previous paragraph, this is exactly the set of $t \in T$ for which $F_t$ is separably dominant, so we are done.
\end{proof}

\begin{remark}
The locus of (separably) dominant maps is not necessarily closed in $T$ (this seems to have been overlooked in \cite[Proposition~6.1]{IwanariMoriwaki}).  
Consider, for example, the map $\mathbb{P}^1 \times \mathbb{P}^1 \ratmap \mathbb{P}^1$ given by $(x,t)\mapsto xt$. Its indeterminacy locus consists of the two points $(0,\infty)$ and $(\infty,0)$, so that this is indeed a relative rational map. If we fix any value $t \in \mathbb{P}^1 \setminus \{0,\infty\}$, the resulting rational map $\mathbb{P}^1 \ratmap \mathbb{P}^1$ is an isomorphism, and thus separably dominant. For $t \in \{0, \infty\}$, the resulting map is constant.  Thus,  the locus where the map is separably dominant is $\mathbb{P}^1 \setminus \{0,\infty\} \subseteq \mathbb{P}^1$; this is open but not closed. 
\end{remark}

The following statement is a purely algebraic result which we will use in the proof of Lemma \ref{prescribed_poles_closed} below. Recall that if $M$ is an $R$-module, and $f \in R$ is an element, the element $f$ is called $M$-regular if the morphism $M \to M$, $m \mapsto fm$ is injective. In the special case that $M = S$ is an $R$-algebra, this is equivalent to $f$ being a nonzerodivisor in $S$. If $S$ is additionally assumed to be reduced and noetherian, this in turn is equivalent to asking that $\Vanish(f) \subseteq \Spec S$ has codimension $\geq 1$ (where the empty set is considered to have codimension $\infty$). 
 
\begin{lemma}\label{matsumura_flatness}
Let $R$ be a noetherian ring, let $S$ be a noetherian $R$-algebra, let $f \in S$. Let $M$ be a finitely generated $S$-module. Assume that $M$ is flat over $R$ and assume that for every maximal ideal $\mathfrak{m} \subseteq S$, the element $f$ is $M/(\mathfrak{m} \cap R)M$-regular. Then $f$ is $M$-regular and $M/fM$ is flat over $R$.
\end{lemma}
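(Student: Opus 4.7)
The plan is to reduce to the local setting by localizing at an arbitrary maximal ideal of $S$ and then to invoke the local criterion of flatness. Both conclusions --- that $f$ is $M$-regular and that $M/fM$ is $R$-flat --- can be verified after passing to $S_{\mathfrak{m}}$ for every maximal ideal $\mathfrak{m}\subseteq S$, since injectivity of $S$-module maps and flatness of an $S$-module over $R$ can both be tested at maximal ideals of $S$. We therefore reduce to the following situation: $(R, \mathfrak{p})$ and $(S, \mathfrak{m})$ are noetherian local with $\mathfrak{p} = \mathfrak{m}\cap R$, $M$ is a finitely generated $S$-module flat over $R$, and multiplication by $f$ is injective on $M/\mathfrak{p} M$.

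Factor the multiplication-by-$f$ endomorphism $\varphi\colon M\to M$ as $M\twoheadrightarrow L\hookrightarrow M$ with $L = fM$, producing the two short exact sequences
\[0\to K\to M\to L\to 0,\qquad 0\to L\to M\to N\to 0,\]
where $K = \ker\varphi$ and $N = M/fM$. Tensoring the second sequence with $R/\mathfrak{p}$ and using the flatness of $M$ over $R$ yields
\[0\to\mathrm{Tor}_1^R(R/\mathfrak{p}, N)\to L/\mathfrak{p} L\xrightarrow{\beta} M/\mathfrak{p} M\to N/\mathfrak{p} N\to 0.\]
The composite $M/\mathfrak{p} M\xrightarrow{\alpha} L/\mathfrak{p} L\xrightarrow{\beta} M/\mathfrak{p} M$ unwinds to multiplication by $f$ on $M/\mathfrak{p} M$, which is injective by hypothesis. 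Since $\beta\circ\alpha$ is injective and $\alpha$ is surjective, $\alpha$ is in fact an isomorphism, and then $\beta = (\beta\circ\alpha)\circ\alpha^{-1}$ is injective. Hence $\mathrm{Tor}_1^R(R/\mathfrak{p}, N) = 0$, and the local criterion of flatness (applicable because $N$ is finitely generated over the noetherian local ring $S$ and $R/\mathfrak{p}$ is a field) yields flatness of $N$ over $R$.

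To conclude that $f$ is $M$-regular, observe that the flatness of $N$ combined with the second short exact sequence forces $L$ to be $R$-flat as well. Tensoring the first short exact sequence with $R/\mathfrak{p}$ then produces
\[0\to K/\mathfrak{p} K\to M/\mathfrak{p} M \xrightarrow{\alpha} L/\mathfrak{p} L\to 0,\]
with $\alpha$ the isomorphism from above, so $K/\mathfrak{p} K = 0$. Since $K$ is a finitely generated $S$-module and $\mathfrak{p} K\subseteq \mathfrak{m} K$, Nakayama's lemma yields $K = 0$. The one point that requires care, rather than a serious obstacle, is the diagram-chasing identification of $\beta\circ\alpha$ with multiplication by $f$ modulo $\mathfrak{p}$; the remainder is a formal application of the local criterion of flatness and Nakayama's lemma.
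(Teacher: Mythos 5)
Your proof is correct. The paper does not actually give an argument here; it simply cites Matsumura (\S20F, Corollary 1), and the argument you present --- reducing to the local case, tensoring the two short exact sequences coming from the factorization of multiplication-by-$f$ with the residue field, identifying $\beta\circ\alpha$ with multiplication by $f$ modulo $\mathfrak{p}$, invoking the local criterion of flatness to get $\operatorname{Tor}_1^R(R/\mathfrak{p},N)=0\Rightarrow N$ flat, and closing with Nakayama --- is precisely the standard proof found in that reference. The reduction to the local setting is sound: $M$-regularity and flatness over $R$ can both be checked after localizing at every maximal ideal of $S$ (flatness because $\operatorname{Tor}_1^R(R/I,M)$ is an $S$-module whose formation commutes with localization at primes of $S$), and the hypothesis localizes correctly since $R\setminus\mathfrak{p}$ maps into $S\setminus\mathfrak{m}$ when $\mathfrak{p}=\mathfrak{m}\cap R$. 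One small presentational point: when invoking the local criterion, what matters is that $N$ is finitely generated over the noetherian local $R$-algebra $S$ with $\mathfrak{p}S\subseteq\mathfrak{m}$; the fact that $R/\mathfrak{p}$ is a field is what makes the auxiliary condition ``$N/\mathfrak{p}N$ flat over $R/\mathfrak{p}$'' automatic, not what makes the criterion applicable. This does not affect correctness.
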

\begin{proof}
See \cite[§20F Corollary 1]{Matsumura}.
\end{proof}

The following lemma and proof are essentially due to Tsushima \cite[Lemma 5]{Tsushima}. 
Before starting the proof, we briefly discuss extending relative rational maps. Let $G \colon X \times T \ratmap Y$ be a relative rational map with $X$, $T$ normal varieties, and $Y$ a proper variety. Then, by properness of $Y$ and normality of $X \times T$, the rational map $G \colon X \times T \ratmap Y$ extends to a morphism $U \to Y$ on a maximal open set $U \subseteq X \times T$ with complement of codimension at least $2$. However, for any given closed point $t \in T$, it might still happen that $U_t := U \cap X_t \subseteq X$ has a complement of codimension $1$. While the restriction of $G$ to $U_t$ will extend to a rational map $X \ratmap Y$ defined in codimension $1$, this extension will in general not be compatible with $G \colon X \times T \ratmap Y$. 

\begin{lemma}\label{prescribed_poles_closed}
Let $X$ and $Y$ be smooth projective varieties of the same dimension $n$, and let $T$ be a variety. Let $G \colon X \times T \ratmap Y$ be a relative rational map over $T$. Let $D_X$ be an effective divisor on $X$ and let $D_Y$ be a divisor on $Y$. Let $m \geq 0$. Let $\omega \in \Gamma(Y, \omega_Y^{\otimes m}(D_Y))$.  Assume that,  for every closed point $t \in T$, the rational map $G_t \colon X \ratmap Y$ is separably dominant. Then, the set $T_\omega$ of $t \in T$ such that $G_t^*\omega $ lies in $ \Gamma(X, \omega_X^{\otimes m}(D_X))$ is closed in $T$.
\end{lemma}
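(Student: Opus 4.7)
The plan is to realize $G^*\omega$ as a rational section of a line bundle on $X\times T$ whose fiberwise regularity encodes membership in $T_\omega$, and then to prove closedness of $T_\omega$ by reducing to the DVR case, where regularity on the generic fiber forces regularity of the whole section. Closedness is local on $T$, so I first replace $T$ by an affine open and by its normalization to assume $T$ is affine and normal, making $X\times T$ normal. By properness of $Y$, $G$ extends to a morphism on a big open $U\subseteq X\times T$ (complement of codimension $\geq 2$). The relative differential pullback $G^*\Omega^1_Y\to\Omega^1_{U/T}=p_X^*\Omega^1_X|_U$ is fiberwise injective by separable dominance of each $G_t$, so its top exterior power is an injection $G^*\omega_Y\hookrightarrow p_X^*\omega_X|_U$, whose cokernel (the relative ramification divisor $R^{\mathrm{rel}}$) is horizontal.

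Using this embedding, I view $s:=G^*\omega$ as a rational section of $\mathcal{L}:=p_X^*(\omega_X^{\otimes m}(D_X))$ on $X\times T$, uniquely extended from $U$ by normality. For each closed $t\in T$, $s|_{X_t}$ and $G_t^*\omega$ agree on the dense open $U_t\subseteq X_t=X$ and hence as rational sections, so $t\in T_\omega$ iff $s|_{X_t}$ is a regular section of $\mathcal{L}|_{X_t}$. The key observation is that the effective polar divisor $P$ of $s$ in $\mathcal{L}$ on $X\times T$ is horizontal: any vertical prime divisor has the form $X\times Z$ for $Z\subseteq T$ prime, with generic point $\xi\in U$; since Lemma \ref{rat_dominant_open} extends separable dominance of $G_t$ from closed $t$ to all $t\in T$ (via Jacobson-density of closed points in the variety $T$), the image $G(\xi)$ is the generic point of $Y$, which avoids the supports of $D_Y$ and $\mathrm{div}_Y(\omega)$; a local divisor computation then shows $s$ has order $\geq 0$ at $\xi$ in $\mathcal{L}$, so $X\times Z$ is not a component of $P$.

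To conclude, I show $T_\omega$ is stable under specialization; combined with constructibility (obtained e.g.\ by identifying $T_\omega$ with the vanishing locus of the image of $s$ in the coherent quotient $\mathcal{L}(P)/\mathcal{L}$ pushed forward to $T$), this yields closedness. Given a specialization $\eta\rightsquigarrow t_0$ in $T$ with $\eta\in T_\omega$, I choose a DVR $R$ and a morphism $\Spec R\to T$ sending generic and closed points to $\eta$ and $t_0$; base-changing reduces to the case $T=\Spec R$. In this setting, $\eta\in T_\omega$ gives $s|_{X_\eta}$ regular, so $P|_{X_\eta}=0$. But horizontality of $P$ means each component $P_i$ surjects onto the one-dimensional $T$, so the generic fiber $P_i\cap X_\eta$ has dimension $\dim X-1$ and is a nonzero divisor on $X_\eta$. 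Thus $P|_{X_\eta}=0$ forces $P=0$, and $s$ is a global regular section of $\mathcal{L}$; restricting to $X_{t_0}$ gives $G_{t_0}^*\omega=s|_{X_{t_0}}\in\Gamma(X,\omega_X^{\otimes m}(D_X))$, i.e.\ $t_0\in T_\omega$.

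The main obstacle will be establishing horizontality of $P$: it requires a careful comparison between $G^*(\omega_Y^{\otimes m}(D_Y))$ and $\mathcal{L}$ at generic points of vertical prime divisors of $X\times T$, relying crucially on extending separable dominance of $G_t$ from closed to all points of $T$ via Lemma \ref{rat_dominant_open}. A secondary technicality is constructibility of $T_\omega$, which could alternatively be handled by a direct semi-continuity argument on the vanishing locus of a global section of a coherent sheaf on $T$.
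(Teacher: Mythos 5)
Your approach is genuinely different from the paper's, and the main geometric idea is correct, but the final step has a real gap.

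The paper works with $G^\ast\omega$ as a rational section of $(\Omega^n_{X\times T})^{\otimes m}$, takes its polar divisor $F$, proves $F\to T$ flat via Matsumura's criterion, maps $T$ to the Hilbert scheme of $X$, and handles the locus where $G_t$ is not defined in codimension one by an induction on $\dim T$. You instead view $G^\ast\omega$ as a rational section of the ``horizontal'' bundle $\mathcal{L}=p_X^\ast(\omega_X^{\otimes m}(D_X))$ and prove that its polar divisor $P$ has no vertical components, using separable dominance at the generic point of each vertical divisor (extended from closed to all points via the openness in Lemma~\ref{rat_dominant_open}). This horizontality, together with the DVR specialization argument, is correct: if $\eta$ generizes $t_0$ and $P|_{X_\eta}=0$, then horizontality forces $P=0$, hence $s$ is globally regular and $t_0\in T_\omega$. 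So far so good, and this is a genuinely different mechanism from the paper's.

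The gap is the constructibility of $T_\omega$. Your parenthetical suggestion --- identifying $T_\omega$ with the vanishing locus of the image of $s$ in $\mathcal{Q}:=\mathcal{L}(P)/\mathcal{L}$ ``pushed forward to $T$'' --- does not work as stated: cohomology and base change for $p_{T\ast}\mathcal{Q}$ is not automatic, and without it the fibre of $p_{T\ast}\mathcal{Q}$ at $t$ need not see $\Gamma(X_t,\mathcal{Q}|_{X_t})$, so zero in one does not detect zero in the other. Moreover, for a general coherent sheaf, the set $\{t:\bar s\otimes\kappa(t)=0\}$ is \emph{open}, not closed, so the claim needs more than the bare words suggest. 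What actually saves you is a consequence of the horizontality you already proved: by the local criterion of flatness applied to $0\to\mathcal{L}\to\mathcal{L}(P)\to\mathcal{Q}\to 0$, horizontality of $P$ is \emph{equivalent} to $\mathcal{Q}$ being $T$-flat. Once $\mathcal{Q}$ is $T$-flat with proper support over $T$, one can twist by $p_X^\ast\mathcal{O}_X(N)$ for $N\gg 0$ to kill higher cohomology and make $p_{T\ast}$ commute with base change; writing $T_\omega=\bigcap_{\sigma\in H^0(X,\mathcal{O}_X(N))}\{t:(\bar s\cdot p_X^\ast\sigma)|_{X_t}=0\}$ then expresses $T_\omega$ as an intersection of closed vanishing loci of sections of locally free sheaves on $T$. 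This in fact gives closedness directly, making the specialization argument redundant. So your strategy is viable and arguably cleaner than the paper's induction on $\dim T$, but the constructibility paragraph as written is a gap that needs the flatness-plus-twist argument (or something equivalent) spelled out.
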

\begin{proof}
The case $\omega = 0$ is clear, so suppose $\omega \neq 0$. By replacing $T$ with an alteration if necessary, we may assume that $T$ is smooth.  

Consider the pullback form $G^*\omega$, and note that it defines a rational section of the vector bundle $(\Omega^n_{X \times T})^{\otimes m}$. For any closed point $t \in T$, we pullback $G^*\omega$ along the inclusion $\iota_t \colon X := X\times\{t\} \subseteq X \times T$ to get the form $\iota_t^*G^*\omega = G_t^*\omega$.
Let $E$ and $F$ be the divisors of zeroes and poles of $G^*\omega$, respectively. For $t$ in $T$, we define $E_t := E \cap X_t$ and $F_t := F \cap X_t$. Since $G_t$ is separably dominant and $\omega \neq 0$, we have that, for every $t$ in $T$, $E_t$ and $F_t$ are (possibly trivial) effective divisors in $X_t$. 
Note that whenever $G_t$ is defined in codimension one, we have that $E_t$ (resp. $F_t$) is the divisor of zeroes (resp. poles) of $G_t^*\omega$. On the other hand, if $G_t$ is not defined at all points of codimension $1$, it may happen that $E_t$ and $F_t$ are strictly bigger than the divisor of zeroes and poles, respectively.

We now prove the result by induction on $\dim(T)$. The case $\dim(T)=0$ is clear. Consider the set $S$ of all $t$ in $T$ such that $\dim(E_t \cap F_t) = n-1$. By semicontinuity of fiber dimension, $S$ is closed in $T$ (since $\dim(E_t \cap F_t) > n-1$ cannot occur). The condition $t \in S$ implies that $G_t$ cannot be defined in codimension $1$. Otherwise the form $G_t^*\omega$ would have to have both a pole and a zero along the codimension $1$ subset $E_t \cap F_t$, which is absurd. Since $G$ is defined at all points of codimension $1$, we see $\dim(S)<\dim(T)$. By the inductive hypothesis, it follows that $S_\omega=S \cap T_\omega$ is closed.   

We now show that $F\to T$ is flat using Lemma \ref{matsumura_flatness}. First, note that it is locally cut out by the vanishing of a single equation given by the denominator of $G^*\omega$. Furthermore, the morphism $X \times T \to T$ is flat and, for every closed point $t$ in $T$, the scheme-theoretic fiber $F_t$ of $F \to T$ is a divisor of $X_t$. 
Thus, we conclude that $F \to T$ is flat by Lemma \ref{matsumura_flatness}. 
 
In particular, there is a morphism $T \to \mathrm{Hilb}(X)$ representing the family $(F_t)_{t \in T}$, where $\mathrm{Hilb}(X)$ is the Hilbert scheme of $X$ over $k$. Now, as there are only finitely many effective divisors with the property of being $\leq D_X$, the set of such divisors form a (finite) closed subscheme of $\mathrm{Hilb}(X)$. It follows that $F_t \leq D_X$ is a closed condition on $t$. 

For $t$ in $T$, the condition $F_t\leq D_X$ implies that $G_t^* \omega \in \Gamma(X,\omega_X^{\otimes m}(D_X))$. Moreover, outside the set $S$ (defined above), the condition $G_t^*\omega \in \Gamma(X,\omega_X^{\otimes m}(D_X))$ is equivalent to $F_t \leq D_X$.
Thus, a point $t \in T$ lies in $T_\omega$ if and only if we have $t \in S \cap T_\omega$ or $F_t \leq D_X$. As $S\cap T_\omega$ is closed in $T$ and the set of $t$ in $T$ with $F_t\leq D_X$ is closed in $T$, this concludes the proof.
\end{proof}

\section{Symmetric differentials on orbifolds}\label{section:sym}
In this section we collect some statements regarding the sheaf of symmetric differentials on an orbifold. We start by recalling their definition, first given by Campana in \cite[Section 2.5]{CampanaJussieu}.

\begin{definition}\label{def:symmetricdifferentials}
Let $(X, \Delta)$ be a smooth orbifold. Let $n, p \geq 0$ be natural numbers. The sheaf of \emph{symmetric differentials}, written $S^n \Omega^p_{(X,\Delta)}$, is the locally free subsheaf of $\Sym^n \Omega^p_X(\log \ceil{\Delta})$ which is étale-locally generated by the following elements:
$$ x^{\ceil{k/m}} \bigotimes_{i=1}^n \frac{dx_{J_i}}{x_{J_i}} $$
Here, the following notation was used:
\begin{itemize}
\item $x_1,...,x_{\dim(X)}$ are a set of local coordinates which exhibit $\Delta$ in normal crossing form.
\item The $J_i$ are subsets of $\{1,...,\dim(X)\}$ of size $p$.
\item $dx_{J_i} := \bigwedge_{j \in J_i} dx_j$ and $x_{J_i} := \prod_{j \in J_i} x_j$
\item $k$ is a tuple of $\dim(X)$ integers, where the $j$-th entry counts the number of occurences of $j$ in the $J_i$.
\item $m$ is a tuple of $\dim(X)$ integers, where the $j$-th entry is the multiplicity of the coordinate $x_j$ in $\Delta$.
\item $x^{\ceil{k/m}} := \prod_{j=1}^{\dim(X)} x_j^{\ceil{k_j/m_j}}$
\end{itemize}
\end{definition}

\begin{definition}
Let  $(X, \Delta_X)$ be a normal orbifold. Choose a big open $U \subseteq X$ such that $U$ is smooth and $\Delta_U := \Delta_X \cap U$ has strict normal crossings. We define $S^n \Omega^p_{(X, \Delta_X)} := \iota_* S^n \Omega^p_{(U, \Delta_U)}$, where $\iota \colon U \to X$ is the inclusion.
\end{definition}

For smooth proper varieties $X$ without any orbifold structure, the sheaves $S^n \Omega^p_{(X,0)}$ defined this way coincide with the usual symmetric powers of the module of differentials $\Sym^n \Omega^p_X$. More generally, if $(X, \Delta)$ is an orbifold where all multiplicities in $\Delta$ are equal to $1$ or $\infty$, the sheaves $S^n \Omega^p_{(X,\Delta)}$ defined above coincide with the symmetric powers of the module of log differentials $\Sym^n \Omega^p_X(\log \Delta)$. However, in general, the sheaves $S^n \Omega^p_{(X,\Delta)}$ are not the symmetric powers of any coherent sheaf (so that calling them symmetric differentials is a significant abuse of language). The main use of $S^n \Omega^p$ for us comes from the fact that these sheaves behave nicely when they are pulled back by orbifold morphisms.
 
\begin{lemma}\label{pullback_of_forms}
If $f \colon (X, \Delta_X) \to (Y, \Delta_Y)$ is a morphism of smooth orbifolds and $n, p \geq 0$, then pullback of differential forms induces a morphism $f^*S^n \Omega^p_{(Y, \Delta_Y)} \to S^n\Omega^p_{(X,\Delta_X)}$.
\end{lemma}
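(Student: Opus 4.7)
The plan is to establish the containment $f^*S^n\Omega^p_{(Y,\Delta_Y)}\subseteq S^n\Omega^p_{(X,\Delta_X)}$ by a codimension-one check. Both sheaves are locally free, hence reflexive, and they coincide with $\Sym^n\Omega^p_X$ away from $\supp\Delta_X$, so it is enough to verify the containment at every codimension-one point $\xi$ of $X$, using the convention that $m(\xi)=1$ when $\xi\notin\supp\Delta_X$ and $m(\xi)=m(D)$ when $\xi$ is the generic point of a component $D\subseteq\supp\Delta_X$.

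Fix such $\xi$ with uniformizer $t\in\OO_{X,\xi}$. Choose étale-local coordinates $y_1,\dots,y_b$ on $Y$ near $f(\xi)$ adapted to $\supp\Delta_Y$, with multiplicities $m_j$. For each $j$, the DVR structure on $\OO_{X,\xi}$ yields a factorization $f^*y_j=U_j\,t^{a_j}$ with $U_j\in\OO_{X,\xi}^\times$ and $a_j\in\ZZ_{\geq 0}$ (the order of vanishing of $f^*y_j$ at $\xi$). The orbifold condition reads $a_j\,m(\xi)\geq m_j$ whenever $a_j\geq 1$ (automatic when $m_j=1$). Consequently,
\[
f^*\frac{dy_j}{y_j}=a_j\,\frac{dt}{t}+\omega_j,
\]
where $\omega_j:=\frac{dU_j}{U_j}$ is a regular $1$-form at $\xi$. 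By alternation, each wedge $\bigwedge_{j\in J_i}(a_j\frac{dt}{t}+\omega_j)$ contains at most one $\frac{dt}{t}$ factor. Hence the pullback of the generator $y^{\ceil{k/m}}\bigotimes_{i=1}^n\frac{dy_{J_i}}{y_{J_i}}$ at $\xi$ is a finite sum of terms of the shape
\[
(\text{regular at }\xi)\cdot t^{\alpha}\cdot\bigotimes_{i\in I}\Bigl(\frac{dt}{t}\wedge(\text{regular }(p-1)\text{-form})\Bigr)\cdot\bigotimes_{i\notin I}(\text{regular }p\text{-form}),
\]
indexed by a subset $I\subseteq\{1,\dots,n\}$ together with, for each $i\in I$, a choice of some $j_0(i)\in J_i$ with $a_{j_0(i)}\geq 1$ (else the term vanishes); here $\alpha:=\sum_j a_j\ceil{k_j/m_j}$.

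Each such term lies in $S^n\Omega^p_{(X,\Delta_X)}$ at $\xi$ precisely when $\alpha\geq\ceil{k'/m(\xi)}$, where $k':=|I|$ is the number of $\frac{dt}{t}$-factors appearing in the tensor. The hard part will be this inequality, but it falls out directly from the orbifold condition:
\[
\alpha\,m(\xi)=\sum_j a_j\,m(\xi)\ceil{k_j/m_j}\geq\sum_{j:a_j\geq 1} m_j\ceil{k_j/m_j}\geq\sum_{j:a_j\geq 1} k_j\geq k',
\]
the last inequality because, for each fixed $j$ with $a_j\geq 1$, the number of $i\in I$ with $j_0(i)=j$ is at most $\#\{i:j\in J_i\}=k_j$. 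Integrality of $\alpha$ then upgrades $\alpha\geq k'/m(\xi)$ to $\alpha\geq\ceil{k'/m(\xi)}$. The degenerate cases $m_j=\infty$ or $m(\xi)=\infty$ are handled via the conventions $\ceil{\cdot/\infty}=0$ together with the observation that the orbifold condition then forces the partner multiplicity to be infinite, making the relevant inequality trivial.
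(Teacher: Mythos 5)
Your proof is correct and takes essentially the same route as the paper's: reduce to a codimension-one point of $X$ by local freeness/reflexivity, pass to \'etale-local coordinates adapted to $\Delta_Y$, expand the pullback of a local generator as a sum over choices of $dt/t$-factors, and bound the pole order at $\xi$ via the orbifold inequality on the vanishing orders $a_j$. The only difference is in the closing estimate, where the paper passes through $\nu_j\geq\ceil{m_j/m(\xi)}$ together with nested ceiling inequalities, whereas you clear the denominator by multiplying through by $m(\xi)$, use $m_j\ceil{k_j/m_j}\geq k_j$, and invoke integrality of $\alpha$ at the very end — a somewhat cleaner packaging of the same bound.
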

\begin{proof}
Campana shows this when $k=\mathbb{C}$ using computations in the analytic topology, see \cite[Proposition 2.11]{CampanaJussieu}. His arguments adapt to positive characteristic, as we show now.  

We have a morphism of sheaves \[f^* \Sym^n \Omega^p_Y(\log \ceil{\Delta_Y}) \to \Sym^n \Omega^p_X(\supp (\Delta_X + f^* \Delta_Y)).\]
As $f^*S^n \Omega^p_{(Y, \Delta_Y)}$ (resp. $S^n\Omega^p_{(X,\Delta_X)}$) is a subsheaf of the source (resp. the target) of this morphism, we may argue \'etale-locally around a fixed point $\eta$. As the sheaves involved are locally free, we may and do assume that  $\eta \in X$ is of codimension $1$ (except for in the trivial situation in which $X$ is zero-dimensional).

Locally around $\eta$, the sheaf $f^* S^n \Omega^p_{(Y,\Delta_Y)}$ is generated by the pullbacks of the local generators of $S^n \Omega^p_{(Y,\Delta_Y)}$ around $f(\eta)$ (see Definition \ref{def:symmetricdifferentials}). Let $\omega \in S^n \Omega^p_{(Y, \Delta_Y)}$ be such a generator. Let $\widetilde{Y} \to Y$ be a connected étale neighborhood of $f(\eta)$ such that   
\begin{enumerate}
\item $\widetilde{Y}$ is an étale open of $\mathbb{A}^d$ with $d = \dim Y$,
\item $\Delta_Y$ is in normal crossing form (i.e., $\Delta_Y$ is given by the pullback of $x_1\cdot \ldots \cdot x_\ell =0$ in $\mathbb{A}^d$ for some $\ell\geq 0$), 
\item $f(\eta)$ specializes to the origin of $\mathbb{A}^d$, and
\item $\omega$ has the form described in Definition \ref{def:symmetricdifferentials}.  
\end{enumerate}

There is a connected étale neighborhood $\widetilde{X} \to X$ of $\eta$ such that $f\vert_{\widetilde{X}}$ factors over $\widetilde{Y}$ and such that $\Delta_X$ is in normal crossings form. Since the induced morphism $f \colon (\widetilde{X}, \widetilde{X} \times_X \Delta_X) \to (\widetilde{Y}, \widetilde{Y} \times_Y \Delta_Y)$ is (still) orbifold, we may replace $(X, \Delta_X)$ by $(\widetilde{X}, \widetilde{X} \times_X \Delta_X)$ and $(Y, \Delta_Y)$ by $(\widetilde{Y}, \widetilde{Y} \times_Y \Delta_Y)$.

As $Y$ is an \'etale open of $\mathbb{A}^d$, we obtain a map $X \to \mathbb{A}^d$ given by $d$ maps $f_1,...,f_d \colon X \to \mathbb{A}^1$. For $i=1,\ldots,d$, we let $m_i$ denote the multiplicity of the (pullback of the) prime divisor $\{y_i=0\}$ in $\Delta_Y$. Since $f(X)$ is not contained in $\Delta_Y$, we know that whenever $m_i > 1$, the function $f_i$ is not identically zero. Viewing $f_i$ as an element of the DVR $\mathcal{O}_{X, \eta}$, we decompose it as $f_i = t^{\nu_i} g_i$ with $t$ a uniformizer and $g_i(\eta) \neq 0$. Since $f$ is an orbifold morphism, for any $i$ with $\nu_i \neq 0$, we have $\nu_i \geq \ceil{m_i/m_\eta}$, where $m_\eta$ is the multiplicity of the divisor $\eta$ in $\Delta_X$.

Let $J \subseteq \{1,...,d\}$ be a $p$-element subset and consider the rational $p$-form $dy_J/y_J$ on $Y$. If none of the functions $f_i$ with $i \in J$ vanish along $\eta$, the pullback $f^*(dy_J/y_J)$ has no pole at $\eta$. If such an $i \in J$ exists, then $f^*(dy_J/y_J)$ has a pole of order at most $1$. Since we can always write $f^*(dy_J/y_J) = (dt/t) \wedge u + v$, where $u$ is a $(p-1)$-form with no pole at $\eta$ and $v$ a $p$-form with no pole at $\eta$, the pullback of $\omega$ is given by   
\[ f^*\omega = \prod_{i=1}^d (f_i)^{\ceil{k_i/m_i}} \bigotimes_{\alpha=1}^n \left( (\frac{dt}{t} \wedge u_\alpha) + v_\alpha  \right).\]
Here, as before, $u_\alpha$ and $v_\alpha$ are forms with no pole at $\eta$. We can write the tensor product of sums as a sum of tensor products. When doing this, the order at $\eta$ of each summand occuring in such a rewriting is at least
$$ \left(\sum_{i=1}^d \ceil{\frac{k_i}{m_i}} \nu_i\right) - k_t, $$
where $k_t$ counts the number of $((dt/t) \wedge u_\alpha)$-factors occuring in that summand (as opposed to $v_\alpha$-factors). Note that $k_t \leq \sum k_i$, where the sum runs over those $i$ for which $\nu_i \geq 0$. Using our estimate $\nu_i \geq \ceil{m_i/m_\eta}$ from before, we obtain that the order at $\eta$ of each summand is at least
$$ \left(\sum_{\substack{i=1 \\ \nu_i \neq 0}}^d \ceil{\frac{k_i}{m_\eta}}\right) - k_t \geq -k_t + \ceil{\frac{k_t}{m_\eta}}, $$
so the pole at $\eta$ is at most of the order we allow for elements of $S^n \Omega^p_{(X, \Delta_X)}$. Hence $f^*\omega \in S^n \Omega^p_{(X, \Delta_X)}$, which concludes the proof.
\end{proof}

If $X$ is a smooth variety of dimension $n$, the sheaf $S^1 \Omega^n_{(X,0)}$ is just the canonical sheaf $\omega_X$. Thus, one might guess that for a smooth orbifold $(X, \Delta)$, the sheaf $S^1 \Omega^n_{(X,\Delta)}$ should correspond to a line bundle related to the $\mathbb{Q}$-divisor $K_X + \Delta$. Of course, naively formulated like this, this guess does not really make sense, since $K_X + \Delta$ is not a $\mathbb{Z}$-divisor and hence does not correspond to any line bundle. However, as we show now, the intuition can be saved. (Recall our convention that, for $\mathcal{L}$ a line bundle, $D$ a $\mathbb{Q}$-divisor, and $n$ a natural number such that $nD$ is a $\mathbb{Z}$-divisor, we write $(\mathcal{L}(D))^{\otimes n}$ instead of $\mathcal{L}^{\otimes n}(nD)$.)

\begin{lemma}\label{symmetric_to_canonical}
Let $(X, \Delta)$ be a smooth orbifold of dimension $n$ and let $N$ be a natural number such that $N \Delta$ is a $\mathbb{Z}$-divisor. Then $S^N \Omega^n_{(X, \Delta)} \cong \omega_X(\Delta)^{\otimes N}$.
\end{lemma}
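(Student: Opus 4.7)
The plan is a direct local computation. Both $S^N \Omega^n_{(X,\Delta)}$ and $\omega_X(\Delta)^{\otimes N}$ sit naturally as subsheaves of the line bundle $\omega_X^{\otimes N}(N\ceil{\Delta}) = \Sym^N \Omega^n_X(\log\ceil{\Delta})$, so it suffices to compare them \'etale-locally. I would work in an \'etale chart admitting coordinates $x_1,\ldots,x_n$ putting $\Delta$ in normal crossings form, so that $\Delta = \sum_{j=1}^n (1-1/m_j)\,\{x_j=0\}$ with $m_j \in \mathbb{Z}_{\geq 1} \cup \{\infty\}$.

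The first key observation is that the indexing in Definition \ref{def:symmetricdifferentials} collapses when $p = n$: the only size-$n$ subset of $\{1,\ldots,n\}$ is the full set, so every $J_i$ must equal $\{1,\ldots,n\}$, forcing the multiplicity tuple $k$ to be $(N,\ldots,N)$. Thus $S^N \Omega^n_{(X,\Delta)}$ has the single local generator
\[\prod_{j=1}^n x_j^{\ceil{N/m_j}-N}\,(dx_1 \wedge \cdots \wedge dx_n)^{\otimes N},\]
with the convention $\ceil{N/\infty}=0$.

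Next I would produce the analogous generator for the right-hand side. Per the paper's convention $\omega_X(\Delta)^{\otimes N}=\omega_X^{\otimes N}(N\Delta)$, and locally $N\Delta = \sum_j (N - N/m_j)\{x_j=0\}$; a local generator is therefore $\prod_{j=1}^n x_j^{N/m_j-N}\,(dx_1 \wedge \cdots \wedge dx_n)^{\otimes N}$. The hypothesis that $N\Delta$ be a $\mathbb{Z}$-divisor is used precisely here: it forces $N/m_j \in \mathbb{Z}$ for every $j$ (with $N/\infty = 0$), hence $\ceil{N/m_j}=N/m_j$, and the two generators match. This identifies the two subsheaves inside $\omega_X^{\otimes N}(N\ceil{\Delta})$.

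There is no real obstacle here beyond bookkeeping: one must unwind the paper's convention on $\omega_X(\Delta)^{\otimes N}$, carefully track the ceiling in both the finite and the $m_j = \infty$ cases, and recognize that the integrality hypothesis on $N\Delta$ is exactly what is needed to drop the ceiling.
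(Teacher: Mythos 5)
Your proof is correct and takes essentially the same approach as the paper: view both sheaves as subsheaves of $\omega_X(\ceil{\Delta})^{\otimes N}$, observe that for $p=n$ Definition \ref{def:symmetricdifferentials} has a unique local generator, and use the integrality of $N\Delta$ to drop the ceilings and match it with the generator of $\omega_X(\Delta)^{\otimes N}$.
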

\begin{proof}
For the sheaf of log-differentials, we have $\Omega^n_X(\log \ceil{\Delta}) = \omega_X(\ceil{\Delta})$ (see \cite[§11.1]{Iitaka}). It follows that $\Sym^N \Omega^n_X(\log \ceil{\Delta}) = \Sym^N \omega_X(\ceil{\Delta})$. Since symmetric powers of line bundles agree with tensor powers, it follows that $\Sym^N \omega_X(\ceil{\Delta}) = \omega_X(\ceil{\Delta})^{\otimes N}$. Thus, $S^N \Omega^n_{(X, \Delta)}$ is by construction a locally free subsheaf of $\omega_X(\ceil{\Delta})^{\otimes N}$. More precisely, we see that locally around a point $p \in X$, it is the subsheaf generated by the single element
$$ x_1^{N/m_1}...x_n^{N/m_n} \bigotimes_{l=1}^N \frac{dx_1 \wedge dx_2 ... \wedge dx_n}{x_1x_2...x_n} $$
where $x_1,...,x_n$ are a set of normal crossing coordinates for $\Delta$, and $m_i$ denotes the multiplicity of the coordinate $x_i$ in the orbifold divisor. The subsheaf generated by this element is equal to $\omega_X(\Delta)^{\otimes N}$ in some neighborhood of $p$. The claim follows since $p$ was arbitrary. 
\end{proof}

\begin{corollary} \label{pullback_of_forms2}  
Let $f \colon (X, \Delta_X) \ratmap (Y, \Delta_Y)$ be a near-map of orbifolds with $n := \dim X = \dim Y$. Assume that $X$ is smooth and that $(Y, \Delta_Y)$ is smooth. Let $N$ be a natural number such that $N \Delta_Y$ is a $\mathbb{Z}$-divisor. Then there is an induced pullback morphism $f^* \omega_Y(\Delta_Y)^{\otimes N} \to \omega_X(\Delta_X)^{\otimes N}$ of locally free sheaves on $X$.   
\end{corollary}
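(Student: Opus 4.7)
My plan is to reduce to a big open $U\subseteq X$ on which the near-map becomes a genuine morphism of \emph{smooth} orbifolds, pull back top-degree symmetric differentials via Lemma \ref{pullback_of_forms}, reinterpret the outcome as a morphism of line bundles via Lemma \ref{symmetric_to_canonical}, and finally extend it across the codimension-$\geq 2$ complement $X\setminus U$ using normality of $X$.

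First I would produce the big open. Because $f$ is a near-map, it is already defined on a big open of $X$. Moreover, since $X$ is smooth and $\Delta_X$ is a $\mathbb{Q}$-divisor, the locus where $\Delta_X$ fails to be strict normal crossings has codimension at least two: a non-SNC point lies either in the singular locus of some component of $\supp\Delta_X$ (codimension $\geq 2$ in $X$ since components of $\Delta_X$ are divisors in the smooth variety $X$) or in the intersection of two distinct prime components (also codimension $\geq 2$). Intersecting these two loci gives a big open $U\subseteq X$ on which $f$ is a morphism and $(U,\Delta_U)$, with $\Delta_U := \Delta_X|_U$, is a smooth orbifold. The divisor-pullback inequality in Definition \ref{def:orbifold_near_map} is obviously preserved under restriction, so $f|_U \colon (U,\Delta_U)\to(Y,\Delta_Y)$ is a morphism of smooth orbifolds. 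Lemma \ref{pullback_of_forms} with $p = n = \dim X$ now yields a morphism
\[
(f|_U)^{\ast} S^N \Omega^n_{(Y,\Delta_Y)} \longrightarrow S^N \Omega^n_{(U,\Delta_U)}
\]
of locally free sheaves on $U$, and Lemma \ref{symmetric_to_canonical} (taking $N$ to be a multiple for which both $N\Delta_Y$ and $N\Delta_X$ are $\mathbb{Z}$-divisors, as is implicit in the notation $\omega_X(\Delta_X)^{\otimes N}$) identifies this with a morphism
\[
(f|_U)^{\ast}\omega_Y(\Delta_Y)^{\otimes N} \longrightarrow \omega_U(\Delta_U)^{\otimes N}
\]
of line bundles on $U$.

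For the extension step, both $f^{\ast}\omega_Y(\Delta_Y)^{\otimes N}$ and $\omega_X(\Delta_X)^{\otimes N}$ are honest line bundles on $X$: the former by Remark \ref{remark:pullback_linebundles} applied to the line bundle $\omega_Y(\Delta_Y)^{\otimes N}$ on $Y$ (using local factoriality of the smooth $X$), the latter because $N\Delta_X$ is a $\mathbb{Z}$-divisor. Their restrictions to $U$ are precisely the line bundles featured above. A morphism between these line bundles on $U$ is the same datum as a global section over $U$ of the line bundle $\mathcal{H} := \Hom\bigl(f^{\ast}\omega_Y(\Delta_Y)^{\otimes N},\, \omega_X(\Delta_X)^{\otimes N}\bigr)$, and since $X$ is normal and $U$ is big we have $\Gamma(U,\mathcal{H}|_U) = \Gamma(X,\mathcal{H})$. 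Hence the morphism extends uniquely to $X$, giving what we want.

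The only real obstacle is bookkeeping: verifying the codimension-$\geq 2$ claim for the non-SNC locus of $\Delta_X$, confirming that the orbifold-divisor condition genuinely restricts to $f|_U$, and pinning down the integrality hypothesis on $N$ that makes the canonical-bundle notation meaningful on both sides. The substantive content has already been pushed into Lemma \ref{pullback_of_forms} and Lemma \ref{symmetric_to_canonical}, so no further analytic or algebraic input is needed.
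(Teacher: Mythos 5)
Your argument is correct and follows essentially the same route as the paper's own proof: restrict to a big open $U \subseteq X$ where $f$ is a genuine morphism and $(U, \Delta_X\cap U)$ is a smooth orbifold, apply Lemma \ref{pullback_of_forms} with $p = n$, identify the symmetric-differential sheaves with powers of $\omega(\Delta)$ via Lemma \ref{symmetric_to_canonical}, and extend the resulting line-bundle morphism across the codimension-$\geq 2$ complement by normality (Remark \ref{remark:pullback_linebundles} plus Hartogs). Your explicit check that the non-SNC locus of $\Delta_X$ has codimension $\geq 2$, and your observation that the stated hypothesis on $N$ should really be ``$N\Delta_X$ and $N\Delta_Y$ are both $\mathbb{Z}$-divisors'' for the notation $\omega_X(\Delta_X)^{\otimes N}$ to be meaningful, are both points the paper leaves implicit.
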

\begin{proof}
By Lemma \ref{symmetric_to_canonical}, we have $\omega_Y(\Delta_Y)^{\otimes N}= S^N \Omega^n_{(Y, \Delta_Y)}$. Similarly, as $n = \dim X = \dim Y$, we have $\omega_X(\Delta_X)^{\otimes N} = S^N \Omega^n_{(X, \Delta_X)}$, as this equality holds over a big open and both sides are line bundles. 
By Lemma \ref{pullback_of_forms}, we get a morphism $f^* S^N \Omega^n_{(Y, \Delta_Y)} \to S^N \Omega^n_{(U, \Delta_X \cap U)}$ of sheaves on $U$, where $U \subseteq X$ denotes the intersection of the domain of definition of $f$ with the snc locus of $(X, \Delta_X)$. Since $U \subseteq X$ is a big open, by Remark \ref{remark:pullback_linebundles}, the line bundle $f^*\omega_Y(\Delta_Y)^{\otimes N}$ on $U$ extends to a line bundle on $X$. Furthermore, as the morphism of locally free sheaves extends as well by Hartogs, this concludes the proof.
\end{proof}

If $X$ and $T$ are smooth varieties, and $\pi_X \colon X \times T \to X$ and $\pi_T \colon X \times T \to T$ denote the canonical projections, we get a direct sum composition for the Kähler differentials:
$$ \Omega^1_{X \times T} \cong \pi_X^* \Omega^1_X \oplus \pi_T^* \Omega^1_T $$
Passing to exterior powers, and noting that taking exterior powers commutes with taking pullbacks, we retain such a direct sum decomposition, although it gets slightly more involved:
$$ \Omega^m_{X \times T} \cong \bigoplus_{i=0}^m \pi_X^* \Omega^i_X \otimes \pi_T^* \Omega^{m-i}_T $$
Lastly, if $A$ and $B$ are modules over any commutative ring, we have the following direct sum decomposition for the symmetric powers:
$$ \Sym^n(A \oplus B) \cong \bigoplus_{i=0}^n (\Sym^i A \otimes \Sym^{n-i} B) $$
By combining the two previous lines, we obtain that $\Sym^n \pi_X^* \Omega^m_X$ is a direct summand of $\Sym^n \Omega^m_{X \times T}$. Hence we get an idempotent endomorphism of $\Sym^n \Omega^m_{X \times T}$ which projects an element into that summand. Furthermore, if $t \in T$ is any closed point and $\iota_t \colon X = X \times \{t\} \subseteq X \times T$ is the inclusion, then the pullback map $\iota_t^* \Sym^n \Omega^m_{X \times T} \to \Sym^n \Omega^m_X$ factors over that projection. We now prove the analogous result for orbifolds (see Definition \ref{def:prod} for the notion of a product of orbifold pairs).

\begin{lemma}\label{direct_summand}
Let $(X, \Delta_X)$ and $(T, \Delta_T)$ be normal orbifolds. Let $\pi_X$ denote the canonical projection $X \times T \to X$. Then for all natural numbers $N$ and $m$, the sheaf $\pi_X^* S^N \Omega^m_{(X,\Delta_X)}$ is a direct summand of $S^N \Omega^m_{(X, \Delta_X) \times (T,\Delta_T)}$.
\end{lemma}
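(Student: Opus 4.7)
The plan is to reduce to the smooth snc case and then verify the decomposition using the explicit local generators from Definition~\ref{def:symmetricdifferentials}.

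\emph{Reduction.} Choose big opens $U \subseteq X$ and $V \subseteq T$ on which $X$ and $T$ are smooth and the respective orbifold divisors have strict normal crossings. Then $W := U \times V$ is a big open of the normal variety $X \times T$, on which the product orbifold is smooth snc. By the definition of $S^N \Omega^m$ for a normal orbifold as a pushforward from a smooth snc big open,
\[ S^N \Omega^m_{(X, \Delta_X) \times (T, \Delta_T)} = (\iota_W)_*\, S^N \Omega^m_{(U, \Delta_X|_U) \times (V, \Delta_T|_V)}. \]
For the other side, since $\pi_X$ is flat, flat base change gives $\pi_X^*(\iota_U)_* = (\iota_{U \times T})_* (\pi_X|_{U \times T})^*$; combining this with the fact that $U \times T$ is normal with $W$ as a big open, one gets
\[ \pi_X^* S^N \Omega^m_{(X, \Delta_X)} = (\iota_W)_*\, (\pi_X|_W)^* S^N \Omega^m_{(U, \Delta_X|_U)}. \]
Thus a direct summand decomposition on $W$ pushes forward to one on $X \times T$, and we may assume that $(X, \Delta_X)$ and $(T, \Delta_T)$ are smooth snc orbifolds.

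\emph{Ambient decomposition.} The standard product formula for snc log differentials,
\[ \Omega^1_{X \times T}\bigl(\log\lceil \Delta_X \times T + X \times \Delta_T \rceil\bigr) \cong \pi_X^* \Omega^1_X(\log\lceil \Delta_X \rceil) \oplus \pi_T^* \Omega^1_T(\log\lceil \Delta_T \rceil), \]
combined with taking $m$-th exterior powers and then $N$-th symmetric powers as in the paragraph preceding the lemma, yields a direct sum decomposition of $\Sym^N \Omega^m_{X \times T}(\log\lceil \Delta_X \times T + X \times \Delta_T \rceil)$ in which $\pi_X^* \Sym^N \Omega^m_X(\log\lceil \Delta_X \rceil)$ appears as a summand; let $\rho$ denote the associated idempotent projection.

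\emph{Local verification.} It remains to show that $\rho$ restricts to an endomorphism of $S^N \Omega^m_{(X, \Delta_X) \times (T, \Delta_T)}$ whose image is precisely $\pi_X^* S^N \Omega^m_{(X, \Delta_X)}$; this is checked étale-locally. Pick coordinates $x_1, \ldots, x_d$ on $X$ and $t_1, \ldots, t_s$ on $T$ adapted to the snc divisors, with respective multiplicities $m^X_j$ and $m^T_j$. By Definition~\ref{def:symmetricdifferentials}, the local generators of $S^N \Omega^m_{(X, \Delta_X) \times (T, \Delta_T)}$ take the form
\[ \prod_{j=1}^d x_j^{\lceil k^X_j / m^X_j \rceil} \prod_{j=1}^s t_j^{\lceil k^T_j / m^T_j \rceil} \bigotimes_{i=1}^N \frac{dz_{J_i}}{z_{J_i}}, \]
where each $J_i \subseteq \{1, \ldots, d+s\}$ has size $m$ and $z$ denotes the appropriate variable. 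The idempotent $\rho$ annihilates any such generator in which some $J_i$ contains a $t$-index, and fixes those with all $J_i \subseteq \{1, \ldots, d\}$. For the latter, the tuple $k^T$ vanishes, so $\prod_j t_j^{\lceil k^T_j / m^T_j \rceil} = 1$, and the generator becomes the $\pi_X$-pullback of a local generator of $S^N \Omega^m_{(X, \Delta_X)}$ from Definition~\ref{def:symmetricdifferentials}. These are exactly the local generators of $\pi_X^* S^N \Omega^m_{(X, \Delta_X)}$, and the claim follows. The main obstacle is the bookkeeping in this local identification --- matching the ceiling data of the $t$-coordinates to the behaviour of $\rho$ on the direct summands --- but this simplifies immediately because $\rho$ is nonzero only on generators with no $t$-indices, where the $t$-ceiling factors automatically equal $1$.
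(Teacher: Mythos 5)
Your proof is correct and follows essentially the same approach as the paper's: reduce to the smooth snc case by pushing forward from big opens (using flat base change), invoke the product decomposition of the ambient sheaf of log differentials, and check on the explicit local generators of Definition~\ref{def:symmetricdifferentials} that the idempotent projection preserves the subsheaves, where the key observation is that generators with only $x$-indices in the $J_i$ have trivial $t$-ceiling prefactors. The paper inserts an explicit intermediate step treating the case where all multiplicities are $1$ or $\infty$ and phrases the final local check in terms of $\iota_t^*$ rather than the projection $\rho$, but these are presentational differences rather than substantive ones.
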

\begin{proof}
First assume that the result is true in the case of the smooth orbifolds. Let $X^o \subseteq X$ and $T^o \subseteq T$ denote the snc loci, and note that these are big opens. Then, since forming the pushforward commutes with forming direct sums, we see that $\iota_{(X^o \times T^o)*}\pi_{X^o}^*S^N \Omega^m_{(X^o, \Delta_{X^o})}$ is a direct summand of $S^N \Omega^m_{(X, \Delta_X) \times (T, \Delta_T)}$. Since $\iota_{(X^o \times T^o)*} \pi_{X^o}^* = \pi_X^* \iota_{X^o*}$ for locally free sheaves, the desired result follows. 

By the above, we may assume that $(X, \Delta_X)$ and $(T, \Delta_T)$ are smooth.
Assume first that $\Delta_X$ and $\Delta_T$ are $\mathbb{Z}$-divisors, i.e. that all multiplicities are either $1$ or $\infty$. In this case, we have $S^N \Omega^m_{(X,\Delta_X)} = \Sym^N \Omega^m_X(\log \Delta_X)$. The latter is a genuine symmetric power of an exterior power of $\Omega^1_X(\log \Delta_X)$. Notice that the decomposition
$$ \Omega^1_{X \times T}(\log (\Delta_X \times T + X \times \Delta_T)) = \pi_X^* \Omega^1_X(\log \Delta_X) \oplus \pi_T^* \Omega^1_T(\log \Delta_T) $$
is still valid. Thus, the discussion of the previous paragraph applies, proving the result in this case.

Finally, we treat the general case in which $\Delta_X$ and $\Delta_T$ are not $\mathbb{Z}$-divisors. By definition, the sheaf $S^N \Omega^n_{(X,\Delta_X)}$ is a subsheaf of
\[ \Sym^N \Omega^m_{X}(\log \ceil{\Delta_X}), \] 
and, similarly, the sheaf $S^N \Omega^m_{(X,\Delta_X)\times (T,\Delta_T)}$ is a subsheaf of
\[ \Sym^N \Omega^m_{X\times T}(\log \ceil{\Delta_X\times T + X\times \Delta_T}). \]
By the previous paragraph we know that the morphism
\[ \pi_X^* \Sym^N \Omega^m_{X}(\log \ceil{\Delta_X}) \to \Sym^N \Omega^m_{X\times T}(\log \ceil{\Delta_X \times T + X \times \Delta_T}) \]
is injective and has a retract. Since the projection map $\pi_X$ is orbifold, it follows from  Lemma \ref{pullback_of_forms} that the above injection sends the subsheaf $\pi_X^*S^N\Omega^m_{(X,\Delta_X)}$ to the subsheaf $S^N\Omega^m_{(X,\Delta_X)\times (T,\Delta_T)}$. To prove the claim, it thus suffices to show that the retraction also respects these subsheaves. This can be checked locally, and it suffices to consider the generators. This can be done very explicitly.

Indeed, let $(x,t) \in X \times T$ be any closed point, let $dx_1,...,dx_n$ be local coordinates for $X$ around $x$ which exhibit $\Delta_X$ in normal crossings form, and let $dt_1,...,dt_r$ be local coordinates for $T$ around $t$ which exhibit $\Delta_T$ in normal crossings form. Then $dx_1,...,dx_n, dt_1,..., dt_r$ are local coordinates for $X \times T$ exhibiting its orbifold divisor in normal crossings form. Let $\omega$ be a local generator of $S^N \Omega^m_{(X, \Delta_X) \times (T,\Delta_T)}$ around $(x,t)$. If $\omega$ contains any factors containing a $dt_i$, the pullback $\iota_t^* \omega$ will be identically zero. Thus, it remains to consider the case where the only differentials appearing in $\omega$ are products of $dx_i$ terms. Pulling back such a generator of $S^N\Omega^m_{(X,\Delta_X)\times (T,\Delta_T)}$ along $\iota_t$ yields a (formally identical) generator of $S^N \Omega^m_{(X,\Delta_X)}$. This finishes the proof.
\end{proof}

\begin{lemma}\label{factor_over_projection}
Let $(X, \Delta_X)$ be a normal orbifold and let $(T, \Delta_T)$ be a smooth orbifold. For a closed point $t \in T \setminus \Delta_T$, write $\iota_t \colon X = X \times \{t\} \subseteq X \times T$. Then, for any natural numbers $m$ and $N$, the pullback map $\iota_t^* S^N \Omega^m_{(X, \Delta_X) \times (T, \Delta_T)} \to S^N \Omega^m_{(X, \Delta_X)}$ factors over the projection to $\iota_t^* \pi_X^* S^N \Omega^m_{(X,\Delta_X)}$.
\end{lemma}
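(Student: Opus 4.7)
The plan is to reduce the statement to the corresponding factorization for the ambient sheaves of symmetric log differentials, where it is a direct consequence of the decomposition of $\Omega^1_{X \times T}(\log)$ and the vanishing $\iota_t^*(dt_j) = 0$, and then to transfer the factorization to the $S^N$ subsheaves using Lemma \ref{direct_summand} and Lemma \ref{pullback_of_forms}.

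First I would reduce to the smooth case exactly as in the opening paragraph of the proof of Lemma \ref{direct_summand}: both the pullback along $\iota_t$ and the projection $p$ of Lemma \ref{direct_summand} commute with pushforward from the snc locus $X^o \subseteq X$, so we may assume that $(X, \Delta_X)$ is smooth. Since $t \notin \supp \Delta_T$, the inclusion $\iota_t$ defines an orbifold morphism $(X, \Delta_X) \to (X, \Delta_X) \times (T, \Delta_T)$, and Lemma \ref{pullback_of_forms} realizes the orbifold pullback as the restriction of the ambient pullback
\[ \iota_t^* \Sym^N \Omega^m_{X \times T}(\log \lceil \Delta_X \times T + X \times \Delta_T \rceil) \to \Sym^N \Omega^m_X(\log \lceil \Delta_X \rceil). \]

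Next I would exploit the standard splitting
\[ \Omega^1_{X \times T}(\log \lceil \Delta_X \times T + X \times \Delta_T \rceil) = \pi_X^* \Omega^1_X(\log \lceil \Delta_X \rceil) \oplus \pi_T^* \Omega^1_T(\log \lceil \Delta_T \rceil), \]
valid because the boundary is normal crossings. Taking exterior and symmetric powers produces a projection $P \colon \Sym^N \Omega^m_{X \times T}(\log) \to \pi_X^* \Sym^N \Omega^m_X(\log \lceil \Delta_X \rceil)$ whose restriction to orbifold subsheaves is precisely the projection $p$ of Lemma \ref{direct_summand}. Working in local coordinates $(x_1, \ldots, x_n, t_1, \ldots, t_r)$ around $(x,t)$ with $t$ at the origin---and with $\Delta_T$ absent near $t$, so that the $\pi_T^*$-factors involve only ordinary differentials $dt_j$---the relation $\iota_t^*(dt_j) = 0$ implies that the ambient pullback $\iota_t^* \Omega^m_{X \times T}(\log) \to \Omega^m_X(\log \lceil \Delta_X \rceil)$ is the identity on the summand with all indices coming from $X$ and zero on every other summand. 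Functoriality of $\Sym^N$ then propagates this to a factorization of $\iota_t^*$ through $\iota_t^* P$ at the level of ambient $\Sym^N \Omega^m_{X \times T}(\log)$.

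The only mildly delicate point is the bookkeeping needed to check that the factorization of the ambient pullback restricts correctly to the orbifold subsheaves $S^N \Omega^m_{(X, \Delta_X) \times (T, \Delta_T)}$ and $\pi_X^* S^N \Omega^m_{(X, \Delta_X)}$. But this is precisely the compatibility content of Lemma \ref{direct_summand} (relating $p$ to $P$) and Lemma \ref{pullback_of_forms} (relating the orbifold pullback to the ambient one). Consequently I do not anticipate any substantial obstacle: the proof is essentially an unwinding of the previous two lemmas together with the standard $\pi_X^*$-summand factorization of $\iota_t^*$ on ordinary log differentials.
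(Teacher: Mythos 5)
Your proposal is correct and follows essentially the same route as the paper: both arguments reduce to the snc locus, observe that $\iota_t$ is an orbifold morphism because $t \notin \supp \Delta_T$ (so Lemma \ref{pullback_of_forms} applies), and use the direct-sum structure from Lemma \ref{direct_summand} together with the vanishing $\iota_t^*(dt_j)=0$ to identify the pullback as the projection onto the $\pi_X^*$-summand. The only difference is one of emphasis: the paper's proof is terse, deriving the factorization from $\pi_X \circ \iota_t = \id_X$ and tacitly leaning on the local computation already done in the last paragraph of Lemma \ref{direct_summand}, whereas you spell out the underlying local-coordinate argument showing that $\iota_t^*$ kills every summand containing a $dt_j$-factor.
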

\begin{proof}
We first show that the pullback map considered in the statement is well-defined. To do so, let $X^o\subseteq X$ be the snc locus of $(X,\Delta_X)$ and note that $X^o$ is a big open of $X$. The morphism $\iota_t \colon X \to X\times T$ induces an orbifold morphism $X^o\to X^o\times T$ (i.e., $\iota_t$ is orbifold over the snc locus). In particular, by Lemma \ref{pullback_of_forms}, there is an induced morphism $\iota_t^* S^N \Omega^m_{(X^o, \Delta_{X^o}) \times (T, \Delta_T)} \to S^N \Omega^m_{(X^o, \Delta_{X^o})}$. Since the sheaves involved are reflexive, this pullback map extends to  a pullback map  $\iota_t^* S^N \Omega^m_{(X, \Delta_X) \times (T, \Delta_T)} \to S^N \Omega^m_{(X, \Delta_X)}$.

To see that the pullback map factors as claimed, note that $\pi_X \circ \iota_t = \id_X$ is the identity morphism, so that in fact $\iota_t^* \pi_X^* S^N \Omega^m_{(X,\Delta_X)} = S^N \Omega^m_{(X,\Delta_X)}$. This concludes the proof.
\end{proof}

\section{Kobayashi--Ochiai's theorem for orbifold pairs}

In this section, we prove the finiteness theorem for dominant maps into a smooth orbifold of general type $(Y, \Delta_Y)$. The first step of the proof is to show that given a dominant morphism $f \colon (X, \Delta_X) \to (Y, \Delta_Y)$, we can recover $f$ from its induced map on global sections of the canonical bundles $\omega_Y(\Delta_Y)^{\otimes N}(Y) \to \omega_X(\Delta_X)^{\otimes N}(X)$ for sufficiently large $N$, where $N$ only depends on $(X, \Delta_X)$ and $(Y, \Delta_Y)$ but not on $f$. This allows us to shift the focus from studying dominant morphisms to studying linear maps $\omega_Y(\Delta_Y)^{\otimes N}(Y) \to \omega_X(\Delta_X)^{\otimes N}(X)$ satisfying certain conditions.
 
To state the next lemma, we introduce some terminology. We call a line bundle \emph{very big} if the rational map to projective space induced by its global sections is birational onto its image. Note that every big line bundle has a tensor power which is very big. Of course, a very ample line bundle is very big. Also, if $V$ is a vector space, the projective space $\mathbb{P}(V)$ parametrizes subspaces of \emph{codimension} $1$.

\begin{lemma}\label{tsushimalemma2}
Let $X$ and $Y$ be projective varieties. Assume that $X$ is locally factorial. Let $\mathcal{L}_X$ and $\mathcal{L}_Y$ be line bundles on $X$ and $Y$ respectively. Assume that $\mathcal{L}_X$ is very big and that $\mathcal{L}_Y$ is very ample. Consider the following set:
$$ S = \{ (f, \phi)~\vert~f \colon X \ratmap Y~\text{dominant and}~\phi \colon f^*\mathcal{L}_Y \to \mathcal{L}_X~\text{injective} \} $$
If $(f,\phi)$ and $(g,\psi)$ have the same image under the composed map of sets
$$ S \to \Hom(\mathcal{L}_Y(Y), \mathcal{L}_X(X))\setminus\{0\} \to \{\text{rational maps from }\mathbb{P}(\mathcal{L}_X(X))\text{ to }\mathbb{P}(\mathcal{L}_Y(Y)) \}, $$
then $f=g$.
\end{lemma}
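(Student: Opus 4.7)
The plan is to use the two line bundles to identify $X$ and $Y$ with (birational images of) subvarieties of projective space and then show that the rational map $F_{f,\phi}$ of these projective spaces restricts to $f$ under that identification. Let $i_Y \colon Y \hookrightarrow \mathbb{P}(\mathcal{L}_Y(Y))$ denote the closed embedding provided by the very ample $\mathcal{L}_Y$ (sending a point $y$ to the hyperplane of sections of $\mathcal{L}_Y$ vanishing at $y$), and let $i_X \colon X \ratmap \mathbb{P}(\mathcal{L}_X(X))$ denote the rational map determined by $|\mathcal{L}_X|$, which is birational onto its image since $\mathcal{L}_X$ is very big.

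Given $(f,\phi) \in S$, the associated linear map $\alpha_{f,\phi} \colon \mathcal{L}_Y(Y) \to \mathcal{L}_X(X)$ is $s \mapsto \phi(f^*s)$, which is well-defined by Remark \ref{remark:pullback_linebundles} since $X$ is locally factorial, and which is injective (in particular nonzero) because $f$ is dominant and $\phi$ is injective. The rational map $F_{f,\phi} \colon \mathbb{P}(\mathcal{L}_X(X)) \ratmap \mathbb{P}(\mathcal{L}_Y(Y))$ induced by $\alpha_{f,\phi}$ sends a hyperplane $H \subseteq \mathcal{L}_X(X)$ to $\alpha_{f,\phi}^{-1}(H)$, and is defined precisely at those $H$ with $\mathrm{im}(\alpha_{f,\phi}) \not\subseteq H$. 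The heart of the argument is the identity
\[
F_{f,\phi} \circ i_X = i_Y \circ f
\]
of rational maps $X \ratmap \mathbb{P}(\mathcal{L}_Y(Y))$. To verify it, I would pick a sufficiently general closed point $x \in X$ at which $f$, $i_X$, and the composition $F_{f,\phi} \circ i_X$ are all defined, and such that the stalk map $\phi_x \colon (f^*\mathcal{L}_Y)_x \to (\mathcal{L}_X)_x$ is an isomorphism; this last condition is open and dense because $\phi$ is a nonzero map of line bundles. Unwinding the definitions, $F_{f,\phi}(i_X(x))$ is the hyperplane of $s \in \mathcal{L}_Y(Y)$ satisfying $\phi(f^*s)(x) = 0$, and since $\phi_x$ is an isomorphism this is exactly the hyperplane of sections vanishing at $f(x)$, namely $i_Y(f(x))$.

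With this identity in hand, the conclusion is immediate: if $(f,\phi)$ and $(g,\psi)$ have equal image under the composed map, then $F_{f,\phi} = F_{g,\psi}$, so $i_Y \circ f = F_{f,\phi} \circ i_X = F_{g,\psi} \circ i_X = i_Y \circ g$, and the injectivity of the closed embedding $i_Y$ forces $f = g$. The one technical point I expect to require care is checking that the composition $F_{f,\phi} \circ i_X$ is actually defined at a generic point, i.e., that $\mathrm{im}(\alpha_{f,\phi})$ is not contained in the hyperplane $i_X(x)$ for generic $x$. This is the main (minor) obstacle, and it is resolved by invoking the base-point-freeness of the very ample $\mathcal{L}_Y$, which supplies a section not vanishing at $f(x)$; applying $\alpha_{f,\phi}$ to this section and using that $\phi_x$ is an isomorphism then produces an element of $\mathrm{im}(\alpha_{f,\phi})$ not in $i_X(x)$, as required.
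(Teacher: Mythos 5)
Your proof is correct and takes essentially the same route as the paper: both reduce the problem to the commutativity of the square relating $f$ (resp. $g$) and the induced rational map of projective spaces, together with the observation that $\overline{X}$ meets the locus of definition of that map. The only difference is one of emphasis: the paper simply asserts the commutativity and handles definedness via the abstract observation that the indeterminacy locus is a linear subspace while $\overline{X}$ spans $\mathbb{P}(\mathcal{L}_X(X))$, whereas you verify commutativity by an explicit computation at a general point $x$ where $\phi_x$ is an isomorphism, and use global generation of $\mathcal{L}_Y$ for the definedness. Both are fine; yours is slightly more hands-on but proves the same thing.
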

\begin{proof}
Before starting the proof, we note that the set $S$ is well-defined by Remark \ref{remark:pullback_linebundles}.

Let $(f, \phi)$ and $(g, \psi)$ be elements of $S$ which induce the same rational map
$$\gamma \colon \mathbb{P}(\mathcal{L}_X(X)) \ratmap \mathbb{P}(\mathcal{L}_Y(Y)).$$
By our assumptions on the line bundles, the space $\mathbb{P}(\mathcal{L}_X(X))$ contains a birational copy $\overline{X}$ of $X$ and $\mathbb{P}(\mathcal{L}_Y(Y))$ contains $Y$. The following square commutes whenever the compositions are defined:
\begin{equation*} \begin{tikzcd}
X \ar[r, densely dashed] \ar[d, densely dashed] & Y \ar[d] \\  \mathbb{P}(\mathcal{L}_X(X)) \ar[r, densely dashed, "\gamma"] & \mathbb{P}(\mathcal{L}_Y(Y))
\end{tikzcd} \end{equation*}
Here, the upper horizontal arrow can be either $f$ or $g$. Note that the indeterminacy locus of $\gamma$ is a linear subspace, and that $\overline{X}$ is not contained in any proper linear subspace of $\mathbb{P}(\mathcal{L}(X))$. Hence $\gamma$ is defined on some open of $\overline{X}$ and the commutativity of the diagram above implies that $\gamma$ sends $\overline{X}$ to $Y$. So we get a rational map $\overline{X} \ratmap Y$. The composition $X \ratmap \overline{X} \ratmap Y$ is equal to both $f$ and $g$ whenever it is defined, showing that $f=g$ on a dense open subset. As $Y$ is separated, this implies that $f=g$ everywhere. 
\end{proof}

We now have all the prerequisite results needed for our proof of the announced theorem. We follow the general proof strategy of \cite{Tsushima}.  

\begin{proposition}\label{prop0}
Let $(X, \Delta_X)$ and $(Y, \Delta_Y)$ be proper orbifolds. Assume that $X$ is smooth and that $(Y, \Delta_Y)$ is smooth and of general type. If $\dim X=\dim Y$, then there are only finitely many separably dominant near-maps $(X, \Delta_X) \ratmap (Y, \Delta_Y)$. 
\end{proposition}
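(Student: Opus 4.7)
The plan follows the Kobayashi--Ochiai and Tsushima strategy, with Campana's symmetric orbifold differentials from Section~\ref{section:sym} replacing the usual (log-)symmetric differentials. First I would assume at least one separably dominant orbifold near-map $f_0\colon (X,\Delta_X)\ratmap (Y,\Delta_Y)$ exists (otherwise there is nothing to prove). By Corollary~\ref{pullback_of_forms2}, for any $N\geq 1$ making $N\Delta_X$ and $N\Delta_Y$ integral, $f_0$ induces an injection of line bundles $f_0^\ast\omega_Y(\Delta_Y)^{\otimes N}\hookrightarrow \omega_X(\Delta_X)^{\otimes N}$. The source is big because $(Y,\Delta_Y)$ is of general type, and pullback along a separably dominant map between equidimensional varieties preserves bigness on the level of line bundles; hence $\omega_X(\Delta_X)^{\otimes N}$ is big, and $(X,\Delta_X)$ is of general type. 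Fix $N$ large enough that both $\omega_X(\Delta_X)^{\otimes N}$ and $\omega_Y(\Delta_Y)^{\otimes N}$ are very big, and set $V_Y:=H^0(Y,\omega_Y(\Delta_Y)^{\otimes N})$, $V_X:=H^0(X,\omega_X(\Delta_X)^{\otimes N})$. Each separably dominant orbifold near-map $f$ then produces, by taking global sections of Corollary~\ref{pullback_of_forms2}, a linear map $\alpha_f\colon V_Y\to V_X$. By Lemma~\ref{tsushimalemma2}, applied after harmlessly enlarging $N$ (and, if needed, passing to a birational embedding of $Y$ on which $\omega_Y(\Delta_Y)^{\otimes N}$ is very ample), the assignment $f\mapsto [\alpha_f]\in \mathbb{P}(\Hom(V_Y,V_X))$ is injective.

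Next I would prove a boundedness statement: the injection of line bundles above implies a uniform upper bound on $\deg(f)$ in terms of the volumes of $K_X+\Delta_X$ and $K_Y+\Delta_Y$ (both positive by the general type hypothesis). Hence the graphs $\Gamma_f\subset X\times Y$ have bounded Hilbert polynomial and lie in a finite-type subscheme of $\mathrm{Hilb}(X\times Y)$. Combining Lemma~\ref{rat_dominant_open} (openness of separable dominance) with Lemma~\ref{prescribed_poles_closed} (closedness of the orbifold pole condition $F_t^\ast\omega\in V_X$ as $\omega$ ranges over $V_Y$) applied to the universal family, the locus $\mathcal{M}$ of graphs of separably dominant orbifold near-maps is a locally closed subscheme of $\mathrm{Hilb}(X\times Y)$ of finite type.

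To conclude, I would show $\mathcal{M}$ is zero-dimensional. Suppose for contradiction that some irreducible component of $\mathcal{M}$ is positive-dimensional. Then there is a smooth projective curve $T$ together with a non-constant morphism $T\to \overline{\mathcal{M}}$ meeting $\mathcal{M}$ in a dense open $T^o\subseteq T$, and a relative rational map $F\colon X\times T\ratmap Y$ whose fiber $F_t$ for each $t\in T^o$ is a separably dominant orbifold near-map. Separable dominance of each such $F_t$ forbids vertical prime divisors $X\times\{t_0\}$ with $t_0\in T^o$ from appearing in $F^\ast E$ for any prime $E\subseteq \supp\Delta_Y$, so $F|_{X\times T^o}$ is an orbifold near-map $(X\times T^o,\,\Delta_X\times T^o)\ratmap (Y,\Delta_Y)$. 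For each $\omega\in V_Y$, Lemma~\ref{pullback_of_forms} produces a pullback section $F^\ast \omega$ of $S^N\Omega^n_{(X\times T^o,\,\Delta_X\times T^o)}$, where $n=\dim X=\dim Y$; by Lemma~\ref{direct_summand}, projecting to the rank-one summand $\pi_X^\ast S^N\Omega^n_{(X,\Delta_X)}=\pi_X^\ast\omega_X(\Delta_X)^{\otimes N}$ gives a section $s_\omega$ on $X\times T^o$. The main technical hurdle I foresee is showing that $s_\omega$ extends (without vertical poles) to a section on all of $X\times T$; this should follow from a careful application of Lemma~\ref{prescribed_poles_closed} over the finitely many points of $T\setminus T^o$, combined with the intersection-theoretic bounds from the previous paragraph. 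Granting this, since $T$ is smooth, projective and connected, $H^0(X\times T,\,\pi_X^\ast\omega_X(\Delta_X)^{\otimes N})=V_X$, so $s_\omega$ is the pullback of a single vector $\alpha(\omega)\in V_X$. By Lemma~\ref{factor_over_projection}, the restriction of $s_\omega$ to the fiber $X\times\{t\}$ for $t\in T^o$ equals $F_t^\ast\omega=\alpha_{F_t}(\omega)$, forcing $\alpha_{F_t}=\alpha$ for every $t\in T^o$. The injectivity of $f\mapsto [\alpha_f]$ then implies $F_t$ is independent of $t\in T^o$, contradicting the non-constancy of $T\to \overline{\mathcal{M}}$.
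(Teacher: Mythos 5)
Your proposal follows the paper's general strategy faithfully up to a point: same opening (fix one map, deduce bigness of $\omega_X(\Delta_X)^{\otimes N}$, pass to sections, use Lemma~\ref{tsushimalemma2} to reduce to a finite-dimensional parameter space), and the rigidity argument at the end (constancy of the induced map into $\Hom(V_Y,V_X)$ by projectivity/propagation of the value on fibres) is the right conclusion. The parametrization is different but harmless: the paper parametrizes directly by $\mathbb{P}\bigl(\Hom(V_Y,V_X)^\vee\bigr)$ rather than via graphs in a Hilbert scheme, which lets it skip the volume/degree bound you sketch; both routes lead to a quasi-projective locus carved out by Lemmas~\ref{closed_hom}, \ref{rat_dominant_open}, and \ref{prescribed_poles_closed}.

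The genuine gap is precisely the one you flag yourself. You obtain $s_\omega := \pi(F^\ast\omega)$ only on $X \times T^o$, and then write that the extension of $s_\omega$ over $X\times(T\setminus T^o)$ ``should follow from a careful application of Lemma~\ref{prescribed_poles_closed}.'' It does not. Lemma~\ref{prescribed_poles_closed} is a statement about closedness of a condition on the parameter $t$ under the standing hypothesis that $G_t$ is separably dominant for \emph{all} $t$; it is not a tool for extending sections across bad fibres, and the fibres over $T\setminus T^o$ are exactly where separable dominance (indeed, dominance) may fail, so the hypothesis of that lemma is unavailable. The degree/intersection bounds you invoke bound the generic fibres, not the pole order of $F^\ast\omega$ along $X\times\{t_0\}$ for boundary $t_0$.

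The paper's resolution is different and is the main point where your argument must be repaired: after taking an alteration $\widetilde{H_4}\to\overline{H_4}$ with snc boundary divisor $D_H$, one equips the parameter space with the orbifold structure $(\widetilde{H_4}, D_H)$ of infinite multiplicity and applies Lemma~\ref{direct_summand} to the product orbifold $(X,\Delta_X)\times(\widetilde{H_4}, D_H)$, not to $(X,\Delta_X)\times(\widetilde{H_4}, 0)$. The pullback $G^\ast\omega$ then lies in $S^N\Omega^n_{(X,\Delta_X)\times(\widetilde{H_4},D_H)}$ on all of $X\times\widetilde{H_4}$ — only logarithmic poles along $X\times D_H$ are created — and the direct-summand projection $\pi$ lands in $\pi_X^\ast S^N\Omega^n_{(X,\Delta_X)} = \pi_X^\ast\omega_X(\Delta_X)^{\otimes N}$, a sheaf with no poles along $X\times D_H$ whatsoever. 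So the projected section is automatically regular on the whole product, and no extension lemma is needed. In other words, the log structure on $D_H$ plus Lemma~\ref{direct_summand} (with nontrivial $\Delta_T$) is exactly what replaces the extension step you could not carry out. If you incorporate that change, the rest of your argument (restriction to fibres via Lemma~\ref{factor_over_projection}, constancy, injectivity from Lemma~\ref{tsushimalemma2}) goes through as you wrote it.
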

\begin{proof}
If there are no separably dominant near-maps from $(X,\Delta_X)$ to $(Y,\Delta_Y)$, then we are done. Otherwise, let $f \colon (X, \Delta_X) \ratmap (Y, \Delta_Y)$ be a separably dominant near-map. By Corollary \ref{pullback_of_forms2}, for $N \in \mathbb{N}$ sufficiently divisible, we get an induced morphism of line bundles
$f^* (\omega_Y(\Delta_Y))^{\otimes N} \to \omega_X(\Delta_X)^{\otimes N}$.
Since $f$ is separably dominant, the morphism of line bundles $f^* (\omega_Y(\Delta_Y))^{\otimes N} \to \omega_X(\Delta_X)^{\otimes N}$ is non-zero, hence injective. This implies that $\omega_X(\Delta_X)^{\otimes N}$ is a big line bundle. Increasing $N$ if necessary, we can thus assume that all of the following hold:

\begin{itemize}
\item $\omega_X(\Delta_X)^{\otimes N}$ and $\omega_Y(\Delta_Y)^{\otimes N}$ are well-defined line bundles.
\item The line bundle $\omega_X(\Delta_X)^{\otimes N}$ is very big.
\item There is an effective divisor $C \subseteq Y$ such that $(\omega_Y(\Delta_Y)^{\otimes N})(-C)$ is very ample. 
\end{itemize}

We fix the integer $N$ and the effective divisor $C \subseteq Y$ from the last bullet point. We define $V_X := \Gamma(X, \omega_X(\Delta_X)^{\otimes N})$ and $V_Y := \Gamma(Y, \omega_Y(\Delta_Y)^{\otimes N}(-C))$. Note that we obtain a closed immersion $\iota_Y \colon Y \to \mathbb{P}(V_Y)$ and a rational map $\iota_X \colon X \ratmap \mathbb{P}(V_X)$ which is birational onto its scheme-theoretic image $\overline{X}$. For every dominant near-map $f$, we get an induced vector space morphism $f^* \colon V_Y \to V_X$. By Lemma \ref{tsushimalemma2}, we can recover $f$ from $f^*$ and even from $\mathbb{P}(f^*)$. Thus, we are led to studying linear maps $V_Y \to V_X$.  

Let $H := \Hom(V_Y, V_X)^\vee$, with $^\vee$ denoting the dual. Composition of functions is a canonical bilinear map $V_X^\vee \times \Hom(V_Y, V_X) \to V_Y^\vee$ and after identifying $\Hom(V_Y,V_X)$ with its double dual, we get a bilinear morphism $V_X^\vee \times H^\vee \to V_Y^\vee$. It induces a relative (over $\mathbb{P}(H)$) rational map $F \colon \mathbb{P}(V_X) \times \mathbb{P}(H) \ratmap \mathbb{P}(V_Y)$. For every closed point $h \in \mathbb{P}(H)$, we denote by $F_h$ the rational map $\mathbb{P}(V_X) = \mathbb{P}(V_X) \times \{h\} \ratmap \mathbb{P}(V_Y)$. 

To prove the proposition, we are first going to construct a ``small'' locally closed subset $H_3$ of $\mathbb{P}(H)$ such that the set of separably dominant near-maps (still) injects into $H_3$ via $f\mapsto \mathbb{P}(f^\ast)$.

Let $H_1 \subseteq \mathbb{P}(H)$ be the subset for which $F_h$ maps $\overline{X} \subseteq \mathbb{P}(V_X)$ to $Y \subseteq \mathbb{P}(V_Y)$. To see that this is a meaningful condition, note that the indeterminacy locus of $F_h$ is a linear subspace and that $\overline{X} \subseteq \mathbb{P}(V_X)$ is contained in no proper linear subspace. Let $\eta_{\overline{X}}$ be the generic point of the scheme $\overline{X}$. Since $H_1$ is the set of $h$ in $\mathbb{P}(H)$ such that the morphism
\[
\{\eta_{\overline{X}}\}\times \mathbb{P}(H)\to \mathbb{P}(V_Y)
\]
factors over $Y$, it follows from Lemma \ref{closed_hom} that it is closed in $\mathbb{P}(H)$. 

Note that we obtain a relative rational map $X \times H_1 \ratmap Y$. 
Let $H_2\subset H_1$ be the subset of elements of $H_1$ for which the induced rational map $X \ratmap Y$ is separably dominant. By Lemma \ref{rat_dominant_open}, the set $H_2$ is open in $H_1$.  

Let $H_3\subset H_2$ be the subset of rational maps $g \colon X \ratmap Y$ such that, for every global section $\omega$ of $\omega_Y(\Delta_Y)^{\otimes N}$, the pullback $(g\circ\iota_X)^*\omega$ is a global section of $\omega_X(\Delta_X)^{\otimes N}$. By applying Lemma \ref{prescribed_poles_closed} to every single $\omega$ and taking the intersection over all closed sets obtained this way, we see that $H_3$ is closed in $H_2$. Hence $H_3$ is locally closed in $\mathbb{P}(H)$, and we give it the reduced scheme structure.  

If $f \colon (X, \Delta_X) \ratmap (Y, \Delta_Y)$ is a separably dominant orbifold near-map, then the induced map $\mathbb{P}(f^*)$ lies in $H_3$. As we mentioned before, by Lemma \ref{tsushimalemma2}, different separably dominant orbifold near-maps induce different elements of $H_3$. Therefore, to prove the proposition, it suffices to show that $H_3$ is finite. To do so, let $H_4$ be an irreducible component of $H_3$, so that $H_4$ is a quasi-projective variety. Since $H_3$ is quasi-projective, it has only finitely many irreducible components. Therefore, to conclude the proof, it suffices to show that $H_4$ is finite.

Let $\overline{H_4}$ be the closure of $H_4$ in $\mathbb{P}(H)$, and note that $\overline{H_4}$ is a projective variety. Since $H_1$ is closed in $\mathbb{P}(H)$, we see that $\overline{H_4}$ is contained in $H_1$. In particular, we can interpret every closed point of $\overline{H_4}$ as a (possibly non-dominant) rational map $X \ratmap Y$. Let $\widetilde{H_4}$ be a smooth projective variety and let $\widetilde{H_4} \to \overline{H_4}$ be an alteration such that the preimage of $\overline{H_4} \setminus H_4$ in $\widetilde{H_4}$ is a strict normal crossings divisor $D_H$ (this exists by \cite[Theorem 4.1]{deJongAlterations}).  
We let $G \colon X \times \widetilde{H_4} \ratmap Y$ be the relative rational map induced by the above map $X \times H_1 \ratmap Y$.

By Lemma \ref{direct_summand}, the sheaf $S^N \Omega^n_{(X, \Delta_X) \times (\widetilde{H_4}, D_H)}$ has $\pi_X^* S^N \Omega^n_{(X,\Delta_X)}$ as a direct summand. We denote by 
\[\pi \colon S^N \Omega^n_{(X, \Delta_X) \times (\widetilde{H_4}, D_H)} \to \pi_X^* S^N \Omega^n_{(X,\Delta_X)}\]
the projection. We now define the morphism $\Psi \colon\widetilde{H_4} \to \Hom(V_Y,V_X) = H^\vee$ by $\Psi(h) = \left[ \omega\mapsto \iota_h^*\pi(G^*\omega) \right]$, where $\iota_h\colon X\to X\times \widetilde{H_4}$ is the inclusion map $x\mapsto (x,h)$. We now show that $\Psi$ is a well-defined morphism of varieties.

To do so, fix a closed point $h \in \widetilde{H_4}$ and $\omega \in V_Y$. Then $\omega \in \Gamma(Y, \omega_Y(\Delta_Y)^{\otimes N})$.    
If $h$ does not lie over $H_4$, then the form $G^*\omega$ might have a pole along $X \times \{h\}$, so that the pullback of $G^\ast \omega$ to $X$ is not well-defined. But we do have some control over the poles of $G^*\omega$. Indeed, it can only have poles along $\ceil{\Delta_X} \times \widetilde{H_4}$ with orders bounded by the coefficients of $N \Delta_X$ or poles along $X \times D_H$. The latter poles are logarithmic. This means that $G^*\omega$ is a global section of $S^N \Omega^n_{(X, \Delta_X) \times (\widetilde{H_4}, D_H)}$. In particular, $\pi(G^*\omega)$ is a global section of $\pi_X^* S^N \Omega^n_{(X,\Delta_X)}$. Since global sections of $\pi_X^* S^N \Omega^n_{(X,\Delta_X)}$ only have poles along subsets of $\ceil{\Delta_X} \times \widetilde{H_4}$, the element $\iota_h^*\pi(G^*\omega)$ is always well-defined, i.e., $\Psi \colon \widetilde{H_4} \to H^\vee$ is well-defined. 

The restriction of $\Psi$ to elements of $\widetilde{H_4}$ lying over $H_4$ is simpler to describe. Indeed, if $h$ lies over $H_4$, then we can restrict $G^\ast\omega$ to $X \times \{h\}$. By definition of $H_3$, after identifying $X \times \{h\}$ with $X$, this will give us an element of $V_X = \Gamma(X, \omega_X(\Delta_X)^{\otimes N})$. By Lemma \ref{factor_over_projection}, this element coincides with $\iota_h^*G^*\omega$.

Let $h_1$ and $h_2$ be elements of $\widetilde{H_4}$ lying over $H_4$ such that $\Psi(h_1) = \Psi(h_2)$. Since $\Psi(h_1) \colon V_Y \to V_X$ is the injective map $\omega\mapsto (G\circ \iota_{h_1})^\ast \omega$ and $\Psi(h_2) \colon V_Y\to V_X$ is the injective map $\omega\mapsto (G\circ \iota_{h_2})^\ast \omega$, it follows from Lemma \ref{tsushimalemma2} that the dominant near-maps $G\circ \iota_{h_1}$ and $G\circ \iota_{h_2}$ are equal. This obviously implies that $h_1$ and $h_2$ lie over the same element of $H_4$ (via the alteration $\widetilde{H_4}\to \overline{H_4}$).
 
On the other hand, since $\widetilde{H_4}$ is a projective variety and $H^\vee$ is affine, the morphism $\Psi$ is constant. Since $\Psi$ separates elements lying over distinct points of $H_4$ (see previous paragraph), we conclude that $H_4$ is a singleton. This concludes the proof.
\end{proof}

As we show now, we may drop the properness and smoothness assumptions on $X$.

\begin{corollary}\label{cor0}
Let $(X, \Delta_X)$ and $(Y, \Delta_Y)$ be normal orbifolds with $(Y, \Delta_Y)$ a smooth proper orbifold of general type. If $\dim X = \dim Y$, then there are only finitely many separably dominant near-maps $(X, \Delta_X) \ratmap (Y, \Delta_Y)$. 
\end{corollary}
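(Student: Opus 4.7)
I would reduce the corollary to Proposition \ref{prop0} via a compactification-and-alteration argument. First choose a normal proper compactification $\bar{X}$ of $X$, and extend $\Delta_X$ to an orbifold divisor $\Delta_{\bar{X}}$ on $\bar{X}$ by taking closures of its prime components and assigning multiplicity $\infty$ (coefficient $1$) to every prime component of the boundary $\bar{X}\setminus X$. Since $Y$ is proper and $\bar{X}$ is normal, every rational map $X\ratmap Y$ extends to a near-map $\bar{X}\ratmap Y$ by the valuative criterion. A direct coefficient check (the boundary components contribute harmlessly because they carry multiplicity $\infty$) shows that the extension of any separably dominant orbifold near-map $(X,\Delta_X)\ratmap(Y,\Delta_Y)$ is a separably dominant orbifold near-map $(\bar{X},\Delta_{\bar{X}})\ratmap(Y,\Delta_Y)$, and distinct near-maps have distinct extensions. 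So I may assume from now on that $X$ is proper.

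Next, using \cite[Theorem~4.1]{deJongAlterations} (chosen generically \'etale, which is possible since $k$ is algebraically closed; this is the same assumption implicit in Lemma \ref{rat_dominant_open}), I would take a proper surjective generically \'etale morphism $\pi\colon \widetilde{X}\to X$ with $\widetilde{X}$ smooth projective. Equip $\widetilde{X}$ with the orbifold divisor $\Delta_{\widetilde{X}}$ that assigns to each prime divisor $D'\subset\widetilde{X}$ the multiplicity $\infty$ when $\pi(D')$ has codimension $\geq 2$ in $X$, and the multiplicity $\lceil m(F)/r\rceil$ (with $\infty/r:=\infty$) when $\pi(D')=F$ is a prime divisor of $X$ and $r$ is the coefficient of $D'$ in $\pi^{*}F$ (with the convention $m(F):=1$ if $F\not\subset\supp\Delta_X$). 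Then $(\widetilde{X},\Delta_{\widetilde{X}})$ is a smooth proper orbifold of the same dimension as $(Y,\Delta_Y)$.

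The main computation is to verify that for every separably dominant orbifold near-map $f\colon(X,\Delta_X)\ratmap(Y,\Delta_Y)$, the composition $f\circ\pi$ is a separably dominant orbifold near-map $(\widetilde{X},\Delta_{\widetilde{X}})\ratmap(Y,\Delta_Y)$. For a prime $E\subset\supp\Delta_Y$ and a prime $D'\subset\supp(f\circ\pi)^{*}E$, the coefficient $t$ of $D'$ in $(f\circ\pi)^{*}E=\pi^{*}f^{*}E$ equals $\sum_{F}c_{F}\,r_{F,D'}$, where $F$ ranges over prime divisors of $X$ containing $\pi(D')$ and $c_{F}$ is the coefficient of $F$ in $f^{*}E$. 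When $\pi(D')$ has codimension one in $X$, only $F=\pi(D')$ contributes, and combining the orbifold condition $c_F\, m(F)\geq m(E)$ for $f$ at $F$ with the inequality $r_{F,D'}\,m(D')\geq m(F)$ from our definition yields $t\cdot m(D')\geq m(E)$; when $\pi(D')$ has codimension $\geq 2$, the inequality is automatic from $m(D')=\infty$. Separability is preserved because $\pi$ is generically \'etale. Applying Proposition \ref{prop0} to the smooth proper orbifolds $(\widetilde{X},\Delta_{\widetilde{X}})$ and $(Y,\Delta_Y)$ gives the finiteness of the set of such compositions, and injectivity of $f\mapsto f\circ\pi$ (from surjectivity of $\pi$) yields the corollary.

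The main subtlety is the correct choice of $\Delta_{\widetilde{X}}$. The $\infty$ multiplicity on components $D'$ with $\pi(D')$ of codimension $\geq 2$ is forced because the vanishing orders of $f^{*}E$ at codimension $\geq 2$ loci in $X$ are unbounded over varying $f$, and these propagate through $\pi^{*}$ to unbounded coefficients along exceptional divisors of $\widetilde{X}$; setting any finite multiplicity there would fail to guarantee the orbifold condition for all $f$. The ceiling formula on divisors mapping onto $\supp\Delta_X$ is the minimal choice that both makes $\pi$ itself an orbifold morphism and absorbs the ramification index. A minor technical point is the availability of a generically \'etale alteration in arbitrary characteristic, which holds over algebraically closed fields.
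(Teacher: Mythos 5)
Your proof is correct, but it takes a genuinely different route from the paper's. The paper discards $\Delta_X$ at the outset: it restricts to a smooth open $U \subseteq X \setminus \supp \Delta_X$ (on which the orbifold structure becomes trivial), takes an alteration $V \to U$ with a smooth compactification $\overline{V}$, and, after shrinking so that $\overline{V}\setminus V$ is a divisor $D$, applies Proposition~\ref{prop0} to $(\overline{V}, D)$ with $D$ carrying multiplicity $\infty$. The identification of near-maps $V \ratmap (Y,\Delta_Y)$ with near-maps $(\overline{V}, D) \ratmap (Y,\Delta_Y)$ is automatic because the boundary multiplicities are $\infty$. Your approach instead compactifies $X$ first, retains (and extends) $\Delta_X$, and then transports it through the alteration $\pi$ by the ceiling formula $m(D') = \lceil m(F)/r\rceil$ on divisors dominating prime divisors of $X$ and $m(D') = \infty$ on $\pi$-exceptional divisors. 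Your multiplicity bookkeeping is accurate (the inequalities $c_F\,m(F)\geq m(E)$ and $r\,m(D')\geq m(F)$ do compose correctly, and the $\infty$ on the exceptional locus is indeed forced since the pole orders of $f^*E$ at codimension-$\geq 2$ points of $X$ are not bounded independently of $f$), and correctly observes that Proposition~\ref{prop0} only requires the total space $\widetilde{X}$---not the pair---to be smooth. What the paper's route buys is precisely that it avoids this entire computation: once you throw away $\Delta_X$ by restriction, no compatibility of multiplicities needs to be tracked through the alteration, and the boundary divisor with multiplicity $\infty$ absorbs everything automatically. Your route keeps more structure around than is needed for the finiteness statement.
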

\begin{proof}
Let $U \subseteq X \setminus \supp \Delta_X$ be a smooth open subset. Let $V \to U$ be an alteration with a smooth compactification $V \subseteq \overline{V}$ (see \cite[Theorem 4.1]{deJongAlterations}). Shrinking $U$ if necessary, we may assume that $V \to U$ is quasi-finite and dominant. Then, by Lemma \ref{lemma:near-map-composition}, the set of separably dominant near-maps $(X, \Delta_X) \ratmap (Y, \Delta_Y)$ injects into the set of separably dominant near-maps $V \ratmap (Y, \Delta_Y)$. Shrinking $V$ if necessary, we may assume that the boundary $\overline{V} \setminus V$ is a (reduced) divisor $D \subset \overline{V}$. Then, the set of separably dominant near-maps $V \ratmap (Y,\Delta_Y)$ equals the set of separably dominant near-maps $(\overline{V}, D) \ratmap (Y,\Delta_Y)$. The latter is finite by Proposition \ref{prop0}.
\end{proof}

 Corollary \ref{cor0} implies that the statement of Theorem \ref{thm1} holds, under the additional assumption that $\dim X = \dim Y$. To prove Theorem \ref{thm1}, we use a cutting argument.

\begin{theorem}\label{thm:final}
Let $(X, \Delta_X)$ and $(Y, \Delta_Y)$ be normal orbifolds with $(Y, \Delta_Y)$ smooth, proper, and of general type. Then there are only finitely many separably dominant near-maps $(X, \Delta_X) \ratmap (Y, \Delta_Y)$. 
\end{theorem}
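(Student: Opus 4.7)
We reduce Theorem \ref{thm:final} to its equal-dimensional case, treated in Corollary \ref{cor0}, via a generic cutting argument. Mimicking the opening of the proof of Corollary \ref{cor0}, we replace $(X,\Delta_X)$ by a smooth projective alteration $\overline{V}$ of a smooth open of $X\setminus\supp\Delta_X$, equipped with its reduced strict normal crossings boundary divisor $D$; by Lemma \ref{lemma:near-map-composition}, the set of separably dominant orbifold near-maps to $(Y,\Delta_Y)$ only grows under this operation. We may thus assume that $X$ is smooth projective, and since Corollary \ref{cor0} handles the equal-dimensional case, that $n:=\dim X-\dim Y>0$.

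We next pass to an uncountable algebraically closed extension $K\supseteq k$, which is harmless since distinct near-maps remain distinct after base change and the finiteness statement over $K$ descends to $k$. Fix a very ample line bundle $L$ on $X$, and suppose for contradiction that $\{f_i\}_{i\in I}$ is an infinite set of distinct separably dominant near-maps $(X,\Delta_X)\ratmap (Y,\Delta_Y)$. For $(H_1,\ldots,H_n)\in |L|^n$, set $Z:=H_1\cap\cdots\cap H_n$. A Bertini-type analysis shows that, outside a countable union of proper closed subsets of $|L|^n$, the following all hold: $Z$ is smooth integral of dimension $\dim Y$ and $\Delta_X|_Z$ has strict normal crossings support; each $f_i|_Z$ is defined in codimension one, since the indeterminacy locus of $f_i$ has codimension at least two, and is separably dominant by Lemma \ref{dominant_tangent} applied at a point where $f_i$ is smooth and $T_xZ$ is transverse to $\ker df_{i,x}$; and for every $i\neq j$, $Z$ meets the dense open $\{f_i\neq f_j\}\subseteq X$, so that $f_i|_Z\neq f_j|_Z$. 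By uncountability of $K$, a common $Z$ avoiding all these proper closed subsets exists.

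A mild orbifold extension of Corollary \ref{corollary:near-map-restriction} shows that, for such $Z$, each restriction $f_i|_Z\colon (Z,\Delta_X|_Z)\ratmap (Y,\Delta_Y)$ is genuinely an orbifold near-map: for every prime divisor $E\subseteq\supp\Delta_Y$, each prime component of $\supp f_i^\ast E$ restricts to a prime divisor of $Z$ with the same coefficient in $(f_i|_Z)^\ast E$ and the same multiplicity in $\Delta_X|_Z$, preserving the orbifold multiplicity inequality $tm(D)\geq m(E)$. This produces infinitely many distinct separably dominant orbifold near-maps $(Z,\Delta_X|_Z)\ratmap (Y,\Delta_Y)$, contradicting Corollary \ref{cor0}. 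The main obstacle in this strategy is ensuring separability uniformly in $i$ in positive characteristic: one must combine the tangent-space criterion of Lemma \ref{dominant_tangent} with a generic transversality argument to guarantee that, simultaneously for all $i$, a very general $Z$ meets the smooth locus of $f_i$ at a point where $T_xZ$ surjects onto $T_{f_i(x)}Y$. Once this is in place, the remainder is a routine Bertini-plus-counting exercise exploiting the uncountability of $K$.
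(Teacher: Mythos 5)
Your proposal follows essentially the same strategy as the paper's proof: reduce to the equal-dimensional case (Corollary \ref{cor0}) by generic hyperplane sections, using uncountability of the base field to find a single very general $Z$ working simultaneously for all the $f_i$, and using the tangent-space criterion (Lemma \ref{dominant_tangent0}/\ref{dominant_tangent}) to preserve separable dominance.

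Two minor differences are worth flagging. First, the paper cuts one hyperplane at a time and iterates down to $\dim Y$, while you intersect with $n$ hyperplanes at once; both are fine. Second, you perform the alteration from the proof of Corollary \ref{cor0} \emph{before} cutting, so you carry along the boundary divisor $D$ on $\overline V$ and must therefore verify that the restricted maps $(Z, D|_Z) \ratmap (Y,\Delta_Y)$ are orbifold near-maps, which requires the ``mild orbifold extension'' of Corollary \ref{corollary:near-map-restriction} you invoke but do not prove. The paper sidesteps this entirely: since Corollary \ref{cor0} already handles arbitrary \emph{normal} orbifolds $(X,\Delta_X)$ of the right dimension (it does the alteration internally), one simply replaces $(X,\Delta_X)$ at the outset by a dense smooth open of $X\setminus\supp\Delta_X$, making $\Delta_X=0$; then Corollary \ref{corollary:near-map-restriction} applies as stated and there is nothing extra to check about orbifold multiplicities on $Z$. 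You would do well to adopt this simplification and drop the upfront alteration; with it, your argument matches the paper's.
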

\begin{proof}
We may assume that the base field $k$ is uncountable. As before, we may replace $(X, \Delta_X)$ by a dense open subset of $X \setminus \supp \Delta_X$, so we may assume that $X$ is smooth and quasi-projective and that $\Delta_X$ is empty. 

We argue by contradiction. Assume that $(f_i \colon X \ratmap (Y, \Delta_Y))_{i \in \mathbb{N}}$ is an infinite sequence of pairwise distinct separably dominant near-maps. If $\dim(X) > \dim(Y)$, we will construct an $(\dim(X)-1)$-dimensional subvariety $Z \subset X$ which still admits infinitely many pairwise distinct separably dominant orbifold near-maps to $(Y, \Delta_Y)$. By iterating this process, we obtain a contradiction to Corollary \ref{cor0}. As the case $\dim(Y)=0$ is trivial, we may assume $\dim(X) \geq 2$. 

Let $X \subseteq \mathbb{P}^n$ be an immersion and let $Z \subset X$ be a very general hyperplane section. Then $Z$ is a smooth variety. Since $Z$ is not contained in the indeterminacy locus of any $f_i$, all of the $f_i$ induce rational maps $f_i \vert_Z \colon Z \ratmap Y$. Moreover, being very general, $Z$ does not contain any irreducible component of the indeterminacy locus of any $f_i$. This implies that all $f_i \vert_Z \colon Z \ratmap Y$ are defined in codimension $1$. For fixed $i, j$, the locus where $f_i$ and $f_j$ are equal is contained in a proper closed subset of $X$, so that $f_i$ and $f_j$ are distinct after restricting to a generic hyperplane section. Thus, the near-maps $(f_i\vert_Z)_{i \in \mathbb{N}}$ are pairwise distinct. By the implication $(a) \implies (b)$ of Lemma \ref{dominant_tangent0}, for every $i \in \mathbb{N}$, there is a point $x \in X$ such that $df_{i,x} \colon \Tangent_x X \to \Tangent_{f_i(x)} Y$ is surjective. As this is an open condition on $x$, we see that the map on tangent spaces is surjective at a general point of $X$. Moreover, if $V \subset \Tangent_x X$ is a generic subspace of codimension $1$, the composed map $V \to \Tangent_{f_i(x)} Y$ will still be surjective, as $\dim \Tangent_x X = \dim X > \dim Y = \dim \Tangent_{f_i(x)} Y$. Consequently, we see that the restricted maps $f_i \vert_Z \colon Z \ratmap Y$ still have a surjective differential map at some point. Hence, they are separably dominant by the implication $(b) \implies (a)$ of Lemma \ref{dominant_tangent0}. By Corollary \ref{corollary:near-map-restriction}, the separably dominant near-maps $f_i \vert_Z \colon Z \ratmap (Y, \Delta_Y)$ are orbifold near-maps. This concludes the proof.     
\end{proof} 

\begin{remark}
Theorem \ref{thm:final} also holds if we allow $(X, \Delta_X)$ and $(Y, \Delta_Y)$ to be $\mathbb{Q}$-orbifolds (but still requiring $(Y, \Delta_Y)$ to be smooth, proper, and of general type). Indeed, as before, we immediately reduce to the case that $X$ is a smooth variety. Writing $\Delta_Y = \sum (1-\frac{1}{m_i}) D_i$, we can define $\widetilde{\Delta_Y} = \sum (1-\frac{1}{\ceil{m_i}}) D$. Then every morphism $X \to (Y, \Delta_Y)$ is also a morphism $X \to (Y, \widetilde{\Delta_Y})$; hence we are reduced to the case of $\mathbb{Z}$-orbifolds. 
\end{remark}

As a straightforward application of our result, we can show that every surjective endomorphism of a general type orbifold is an automorphism.

\begin{corollary}\label{corollary:end}
If $(X, \Delta)$ is a smooth proper orbifold pair of general type, then every separably dominant morphism $(X, \Delta) \to (X, \Delta)$ is an automorphism of finite order, and the group of automorphisms of $(X,\Delta)$ is finite.
\end{corollary}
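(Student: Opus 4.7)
My plan is to deduce Corollary \ref{corollary:end} directly from Theorem \ref{thm:final} by a standard pigeonhole-and-cancellation argument. First I would observe that if $f\colon (X,\Delta)\to (X,\Delta)$ is a separably dominant orbifold morphism, then every iterate $f^n$ is also a separably dominant orbifold morphism. This uses two facts: separably dominant morphisms compose to separably dominant morphisms (injective, separable field extensions compose to injective, separable extensions), and the composition of two orbifold morphisms is again an orbifold morphism whenever it is dominant, as noted in the discussion following Lemma \ref{lemma:near-map-composition}.

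Now Theorem \ref{thm:final} (applied to $(Y,\Delta_Y)=(X,\Delta)$) tells us that there are only finitely many separably dominant near-maps $(X,\Delta)\ratmap(X,\Delta)$; in particular only finitely many separably dominant orbifold morphisms. Hence among the iterates $\{f^n\}_{n\geq 1}$ there must be a coincidence $f^m=f^n$ with $m<n$. Passing to function fields, $f$ separably dominant means $f^\ast\colon k(X)\hookrightarrow k(X)$ is injective, so $(f^\ast)^m$ is injective; from $(f^\ast)^m=(f^\ast)^n=(f^\ast)^m\circ (f^\ast)^{n-m}$ we cancel $(f^\ast)^m$ to conclude $(f^\ast)^{n-m}=\id$, i.e.\ $f^{n-m}=\id$ as rational maps. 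Since $X$ is separated and the two morphisms agree on a dense open, we have $f^{n-m}=\id$ as morphisms of varieties, so $f$ is an automorphism of finite order.

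For the finiteness of $\Aut(X,\Delta)$, I would simply note that every orbifold automorphism is separably dominant (it is an isomorphism, so induces an isomorphism on function fields, which is trivially separable), and hence $\Aut(X,\Delta)$ embeds into the finite set provided by Theorem \ref{thm:final}.

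I do not expect any serious obstacle: the only point that requires mild care is verifying that the iterates $f^n$ are genuinely orbifold morphisms (not merely morphisms of the underlying varieties satisfying the multiplicity condition on pullbacks), which is guaranteed by dominance of the iterates together with the composition principle already recorded in the paper.
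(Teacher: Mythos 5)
Your proposal is correct and follows essentially the same route as the paper: iterate $f$, note that all iterates remain separably dominant orbifold morphisms (since dominant compositions of orbifold morphisms are orbifold), invoke the finiteness theorem to find a coincidence $f^m=f^n$, cancel by dominance to get $f^{m-n}=\id$, and observe that $\Aut(X,\Delta)$ injects into the same finite set. Your explicit function-field cancellation is a slightly more careful rendering of the paper's terse "As $f$ is dominant, it follows that…", but the underlying idea is identical; you also correctly write the exponent as $n-m$ with $m<n$, whereas the paper has a small sign typo at this point.
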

\begin{proof}
Let $f \colon (X, \Delta) \to (X, \Delta)$ be a separably dominant orbifold morphism. Let $f^n$ be the $n$-fold composition of $f$. Since every $f^n$ is a separably dominant orbifold morphism, by the finiteness of the set of separably dominant orbifold morphisms $(X,\Delta)\to (X,\Delta)$, there are distinct positive integers $m$ and $n$ with $m>n$ such that $f^m = f^n$. As $f$ is dominant, it follows that $f^{n-m} = \id_X$, so that $f$ is an automorphism of finite order. The finiteness of the automorphism group follows immediately from Proposition \ref{prop0}.
\end{proof}

Let $(X,\Delta)$ be a smooth proper orbifold pair of general type.
We do not know whether every  separably dominant orbifold near-map $f\colon (X,\Delta)\ratmap (X,\Delta)$ is birational and has finite order (in the group of birational self-maps of $X$).  Indeed,   it is not clear to us whether the iterates $f^i = f \circ \ldots \circ f$ of $f$ are orbifold near-maps.

\bibliography{refs_tsushima}{}
\bibliographystyle{alpha}

\end{document}